\newtheorem{theorem}{Theorem}[section]
\newtheorem{lemma}{Lemma}[section]
\newtheorem{proposition}{Proposition}[section]
\newtheorem{definition}{Definition}[section]
\newtheorem{remark}{Remark}[section]
\providecommand{\abs}[1]{\lvert#1\rvert}
\providecommand{\norm}[1]{\lVert#1\rVert}
\providecommand{\Norm}[1]{\left\lVert#1\right\rVert}
\def\De{\Delta}
\def\O{\Omega}
\def\S{\Sigma}
\def\p{\partial}
\def\n{\nabla}
\def\<{\langle}
\def\>{\rangle}
\newcommand{\mfR}{\mathbb{R}}
\newcommand{\mfS}{\mathbb{S}}
\newcommand{\rd}{{\rm d}}
\numberwithin{equation} {section}
\begin{document}
	
		\title[Rigidity and stability]{Rigidity and quantitative stability for partially overdetermined problems and capillary CMC hypersurfaces}
	\author[Jia]{Xiaohan Jia}
	\address[X.J]{School of Mathematics\\
Southeast University\\
 211189, Nanjing, P.R. China}
	\email{jiaxiaohan@xmu.edu.cn}
	\author[Lu]{Zheng Lu}
	\address[Z.L]{School of Mathematical Sciences\\
		Xiamen University\\
		361005, Xiamen, P.R. China
  \newline\indent Mathematisches Institut \\
  Albert-Ludwigs-Universität Freiburg\\
  Freiburg im Breisgau, 79104, Germany}
	\email{zhenglu@stu.xmu.edu.cn}
 \author[Xia]{Chao Xia}
	\address[C.X]{School of Mathematical Sciences\\
		Xiamen University\\
		361005, Xiamen, P.R. China}
	\email{chaoxia@xmu.edu.cn}
	\author[Zhang]{Xuwen Zhang}
	\address[X.Z]{School of Mathematical Sciences\\
		Xiamen University\\
		361005, Xiamen, P.R. China
\newline\indent Institut f\"ur Mathematik, Goethe-Universit\"at, 
60325, Frankfurt, Germany}
	\email{zhang@math.uni-frankfurt.de}
	\thanks{This work is  supported by the NSFC (Grant No. 12271449, 12126102)}
	
		\begin{abstract}
	In this paper, we first prove a rigidity result for a Serrin-type partially overdetermined problem in the half-space, which gives a characterization of capillary spherical caps by the overdetermined problem. In the second part, we prove
    quantitative stability results for the Serrin-type partially overdetermined problem, as well as capillary almost constant mean curvature hypersurfaces in the half-space.
		
		\
		
		\noindent {\bf MSC 2020:} 35N25, 53A10, 35B35, 35A23\\
		{\bf Keywords:}   Serrin's overdetermined problem, Mixed boundary value problem, Capillary surfaces, Rigidity, Quantitative stability.
  \\
		
	\end{abstract}
	
	\maketitle
	
	\medskip
	
\section{Introduction}\label{Sec-1}
The well-known Alexandrov's soap bubble theorem \cite{Aleksandrov62} says that an embedded closed constant mean curvature (CMC) hypersurface in $\subset\mfR^{n+1}$ must be a round sphere. Serrin \cite{Serrin71} proposed the overdetermined boundary value problem in a bounded domain $\O\subset\mfR^{n+1}$:
\begin{align*}
\begin{cases}
\bar\De f=1\quad&\text{in }\O,\\
f=0,\quad&\text{on }\p\O,\\
\bar\nabla_\nu f=c_0,\quad&\text{on }\p\O,
\end{cases}
\end{align*} for some $c_0\in \mathbb{R}$ and he proved that it
admits a solution if and only if $\p\O$ is a round sphere. Both results use Alexandrov's moving plane method. On the other hand, Reilly \cite{Reilly77}, Ros \cite{Ros87} and Weinberger \cite{Weinberger71} reproved the above results by integral method. 

Motivated by the study of equilibrium shape of liquid drops and crystals in a given solid container, we are interested in the so-called capillary hypersurfaces,
which are critical points of the free energy functional under volume constraint.
For the history of the study of capillary phenomenon, we refer to the monograph \cite{Finn86}.

Alexandrov-type theorem for capillary CMC hypersurfaces in the upper half-space has been proved by Wente \cite{Wente80} via the moving plane method.  Reilly-Ros-type proof has been given in \cite{JXZ22} (see also \cite{JWXZ22-B, JWXZ22-A}). It is natural to ask a Serrin-type problem in the half-space, which characterizes capillary spherical cap. This is the first purpose of this paper.

Let $\mfR^{n+1}_+=\{x\in\mfR^{n+1}:x_{n+1}>0\}$ be  the upper half-space and $E_{n+1}$ be the $(n+1)$-coordinate unit vector. 
Let $\O\subset\mfR^{n+1}_+$ be a bounded domain with piece-wise smooth boundary $\p\O=\S\cup T$, where $\S\coloneqq\overline{\p\O\cap \mfR^{n+1}_+}$ is a compact smooth hypersurface in $\overline{ \mfR^{n+1}_+}$, $T\coloneqq\p\Omega\cap\p \mfR^{n+1}_+$ and $\S$ meets $T$ at an $(n-1)$-dimensional submanifold $\Gamma\coloneqq\S\cap T$. Throughout this paper, $\O$ will be in this form.
In the following, we denote by $B_r(z)$ the ball of radius $r$ centered at $z$.
We consider the following overdetermined mixed boundary value problem in $\O$:
\begin{equation}\label{eq-mixed-halfspace0}
\begin{cases}
  \bar\Delta f=1 &\text{in } \Omega\\
  f=0 &\text{on }\Sigma\\
  \bar\nabla_{\bar N} f={ c}
  &\text{on } T,
\end{cases}
\end{equation}
where $\bar N =-E_{n+1}$ is the downward unit normal to $\p\mfR_+^{n+1}$ and $c\in\mfR$.




We first prove the following Serrin-type theorem in the half-space.
\begin{theorem}\label{Thm-rigid-BVP-halfspace}
Let $\O\subset\mfR^{n+1}_+$ be as above.
If the mixed boundary value problem \eqref{eq-mixed-halfspace0} together with $\bar \nabla_\nu f\equiv c_0$ on $\S$ admits a solution $f\in W^{1,\infty}(\O)\cap W^{2,2}(\O)$ such that $f\le 0$,
then $c_0>0$ and $\O$ must be of the form
\begin{align*}
    \O_{n+1,c,c_0}=B_{(n+1)c}(z) \cap\mfR^{n+1}_+,
\end{align*}
where
$z=z'+(n+1)cE_{n+1}$ for some $z'\in\p\mfR^{n+1}_+$,
and the solution $f$ is a quadratic function given by
\begin{align*}
    f(x)=\frac{\vert x-z\vert^2-(n+1)^2c_0^2}{2(n+1)}.
\end{align*}
In particular, $\S$ is a spherical cap that meets $\p\mfR^{n+1}_+$ at a constant contact angle $\theta$, characterized by
$ \cos\theta=-\frac{c}{c_0}.$
\end{theorem}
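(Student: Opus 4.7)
My plan is to use the integral (Reilly--Ros--Weinberger) method, adapted to the mixed boundary value setting in the half-space. The backbone is the pointwise trace inequality
\begin{equation*}
\abs{\bar\nabla^2 f}^2 \geq \frac{1}{n+1}(\bar\Delta f)^2 = \frac{1}{n+1},
\end{equation*}
with equality if and only if $\bar\nabla^2 f = \frac{1}{n+1}\bar g$. Integrating immediately yields $\int_\O \abs{\bar\nabla^2 f}^2\,\rd x \geq \abs{\O}/(n+1)$, and the crux of the proof is to establish the matching upper bound from the boundary data; once equality is forced pointwise, $f$ must be quadratic and the claimed geometry of $\O$ is essentially immediate.

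To obtain the reverse inequality I would integrate by parts (or, equivalently, invoke Reilly's formula) on $\O$ with the boundary split $\p\O=\S\cup T$. Since $\bar\Delta f\equiv 1$ the Bochner bulk term collapses and one is reduced to analysing boundary integrals. On $\S$, the two conditions $f\equiv 0$ and $\bar\nabla_\nu f\equiv c_0$ force $\bar\nabla f=c_0\nu$, and differentiating these constants along $\S$ encodes the relevant Hessian components in terms of the second fundamental form of $\S$. On $T$, since $\bar N=-E_{n+1}$ is parallel in $\mfR^{n+1}$ and $\bar\nabla_{\bar N} f\equiv c$, one has $\bar\nabla^2 f(\bar N,e)=0$ for every $e$ tangent to $T$; combined with $\bar\Delta f=1$, this reduces the $T$-contribution to objects depending only on $c$, $\abs{T}$, and a tangential Laplacian of $f$ on $T$. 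The remaining pieces are then assembled through a Pohozaev-type identity with the translated position vector field $X=x-z$, where I would choose $z=z'+(n+1)c\,E_{n+1}$ with $z'\in\p\mfR^{n+1}_+$ to anticipate the centre of the sphere; this particular shift makes $\<X,\bar N\>$ constant on $T$ and, together with the parallelism of $\bar N$, arranges the $T$-boundary and $\Gamma$-interface terms to cancel.

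Putting everything together should give $\int_\O\abs{\bar\nabla^2 f}^2\,\rd x=\abs{\O}/(n+1)$, forcing pointwise equality in the Cauchy--Schwarz above. Then $\bar\nabla^2 f\equiv \frac{1}{n+1}\bar g$, so $f(x)=\frac{\abs{x-z}^2}{2(n+1)}+\text{const}$ for some $z\in\mfR^{n+1}$. The Neumann value $c$ on $T$ pins $z_{n+1}=(n+1)c$ (hence $z$ has the claimed form), while $f=0$ on $\S$ forces $\S\subset\{\abs{x-z}=(n+1)c_0\}$ with the constant determined by $c_0$. The sign $c_0>0$ follows from Hopf's boundary point lemma applied at an interior point of $\S$, since $\bar\Delta f>0$ and $f\le 0$ with $f=0$ on $\S$. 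Finally, at $x\in\Gamma$ one has $x_{n+1}=0$ and $\nu=(x-z)/((n+1)c_0)$, whence
\begin{equation*}
\cos\theta=-\<\nu,\bar N\>=\<\nu,E_{n+1}\>=\frac{-z_{n+1}}{(n+1)c_0}=-\frac{c}{c_0},
\end{equation*}
matching the stated contact-angle formula.

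The main obstacle I anticipate is the bookkeeping in the second paragraph: executing the Reilly/Pohozaev computation so that all contributions from the flat part $T$ and the edge $\Gamma$ package into the same combinations of $c$, $c_0$, $\abs{\S}$, $\abs{T}$ that appear in the Minkowski-type identities for capillary hypersurfaces in \cite{JXZ22, JWXZ22-A, JWXZ22-B}. The delicate point is to verify that, with the translate $z=z'+(n+1)cE_{n+1}$, the $\Gamma$-line integrals arising from integration by parts vanish automatically; this is the essential way in which the half-space geometry and the specific Neumann constant $c$ enter the rigidity argument.
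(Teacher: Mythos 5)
Your endgame (equality in Cauchy--Schwarz forcing $\bar\nabla^2 f\propto{\rm Id}$, hence $f$ quadratic, $z_{n+1}=(n+1)c$ from the Neumann datum on $T$, $\cos\theta=-c/c_0$, and $c_0>0$ via Hopf) coincides with the paper's. The genuine gap is the core step: you never obtain the matching identity $\int_\O|\bar\nabla^2 f|^2\,\rd x=|\O|/(n+1)$, and the route you sketch (Reilly's formula plus a Pohozaev identity with $X=x-z$) does not close as described. In Reilly's formula the whole contribution of $T$ is $c\int_T\Delta f\,\rd A=c\,c_0\int_\Gamma\sin\tilde\theta\,\rd s$, where $\tilde\theta$ is the a priori non-constant, unknown contact angle along $\Gamma$; this term does not involve $z$ at all, so the translation trick cannot make it cancel, and it cannot be evaluated from the overdetermined data (the Minkowski-type formula \eqref{eq-Minkowski-halfspace} that would handle it presupposes constant contact angle, which is part of the conclusion). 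On the other side, the Pohozaev identity with your $X$, even though $\<X,\bar N\>\equiv(n+1)c$ on $T$ and the $\Gamma$-line integrals vanish simply because $f=0$ on $\Gamma$, brings in $\int_T|\nabla^T f|^2$, $\int_T f$ and $\int_\O f$, none of which is determined by the data and none of which pairs with the $\int_\S H$ or $\int_\Gamma\sin\tilde\theta$ terms from Reilly. After the bookkeeping you are left with one relation in several uncontrolled unknowns, not the desired equality.

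The tell-tale sign is that your rigidity argument never uses the hypothesis $f\le 0$ except to get $c_0>0$; yet for $c>0$ this hypothesis is exactly where the difficulty sits, and it is the engine of the paper's proof. The authors prove the weighted, Magnanini--Poggesi-type identity \eqref{eq-integral-identity-1}, in which the Hessian deficit is integrated against the nonnegative weight $-f$, with the auxiliary quadratic $g$ centered at height $(n+1)c$ playing the role of your translated field. Since $f_\nu\equiv c_0$ forces $c_0=R(c,\O)/(n+1)$ (integrate $\bar\Delta f=1$ over $\O$), the factor $f_\nu^2-\left(\tfrac{R}{n+1}\right)^2$ vanishes identically on $\S$, so the right-hand side of \eqref{eq-integral-identity-1} is zero; then $-f>0$ in $\O$ (strong maximum principle) and Cauchy--Schwarz force $\bar\Delta P\equiv 0$, i.e.\ $\bar\nabla^2 f\propto{\rm Id}$, and the rest is your endgame. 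If you want to avoid the weighted identity you would need a substitute mechanism (e.g.\ a Weinberger-style maximum principle for $P=\tfrac12|\bar\nabla f|^2-\tfrac{1}{n+1}f$), but note that on $T$ one only gets $P_{\bar N}=c\,\bar\nabla^2 f(\bar N,\bar N)-\tfrac{c}{n+1}$, which has no sign, so that route is not immediate either; as it stands, the crucial middle step of your plan is missing and the proposed cancellations do not occur.
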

\begin{remark}
\normalfont
\begin{itemize}

    \item[(i)] The case for $c=0$ can be easily reduced to Serrin's classical result by using a hyperplane reflection.
    \item[(ii)] Note that when $c\le 0$, by maximum principle,
    the condition $f\le 0$ is automatically satisfied. Observe that $\theta\in(0,\pi/2]$ when $c\le 0$.
    \item[(iii)] If $\O$ is such that the contact angle at each point that $\S$ meets $T$ is $\le \frac{\pi}{2}$, then the condition $f\in W^{1,\infty}(\O)\cap W^{2,2}(\O)$ is automatically satisfied by Lieberman's regularity result \cite{Liebermann89}, see also \cite[Appendix A]{JXZ22}.
 \end{itemize}
\end{remark}

To prove \cref{Thm-rigid-BVP-halfspace}, we establish the following crucial integral identity:
\begin{align}\label{int-iden1}
 &\int_{\Omega}(-f)\left\{\vert\bar\nabla^2f\vert^2-\frac{(\bar\De f)^2}{n+1}\right\}\rd x\notag\\
 =&\frac12\int_\Sigma\left(f_\nu^2-\left(\frac{R(c,\O)}{n+1}\right)^2\right)\bigg(f_{\nu}-\Big\<\frac{x+(n+1)cE_{n+1}}{n+1},\nu\Big\>\bigg) \rd A.
 \end{align}
Here
\begin{align*}
    R(c,\O)\coloneqq(n+1)\frac{\vert\Omega\vert-c\vert T\vert}{\vert\S\vert},
\end{align*}
 which will be referred to as the reference radius of $\O$. Here and in the following  we abbreviate $f_\nu=\bar \nabla_\nu f$ for notation simplicity. We remark that such integral identity has been first derived by Magnanini-Poggesi \cite{MP19} for Serrin's classical problem.

\


In the second part of the paper, we study quantitative stability of Alexandrov-type theorem and Serrin-type theorem. The problems for the case of closed hypersurfaces have been studied extensively in the last decades, see for example \cite{DM05,DM06,BNST08,CM17,DMMN18,DG19,DG21,MP19, MP20-1,MP20-2, Scheuer21,SX22}. In particular, in a series of papers, Magnanini-Poggesi \cite{MP19,MP20-1,MP20-2} revisited the integral proofs of Reilly-Ros and Weinberger, and established two kinds of integral identities, by virtue of which they studied quantitative stability for Alexandrov's theorem and Serrin's theorem. The idea of  Magananini-Poggesi is to consider the difference of the exterior radius and the interior radius of a closed hypersurface $\S$, and intuitively, the difference measures the proximity of $\S$ to a round sphere. The key point of their argument is that by virtue of the integral identities, one can get a quantitative control on the difference.
In a most recent work \cite{MP22}, they study the quantitative stability of the partially overdetermined problem in a half-ball proposed by Guo-Xia \cite{GX19}, where rigidity is achieved by free boundary spherical caps.
Our results serve as the extension from the above results to the hypersurfaces with boundary in  the half-space. The rigidity and quantitative stability problem for hypersurfaces with boundary in the half-ball case will be studied in a separate forthcoming paper.
We premise the following definitions to introduce our main quantitative stability results.
\begin{definition}[exterior radius and interior radius, \cite{MP22}]
    \normalfont
Let $\O\subset\mfR_+^{n+1}$ and $\S=\overline{\p\O\cap \mfR_+^{n+1}}$. For a properly chosen center $O\subset\mfR^{n+1}$, the \textit{exterior radius} and the \textit{interior radius} (with respect to $O$) are defined respectively as $$\rho_e\coloneqq\rho_e(\O,O)\coloneqq\max_{x\in\S}|x-O|,\quad \rho_i\coloneqq\rho_i(\O,O)\coloneqq\min_{x\in\S}|x-O|.$$
\end{definition}

\begin{definition}[{$(\eta,a)$-uniform interior cone condition}, \cite{MP22}]
    \normalfont
    Given $\eta\in(0,\pi/2]$ and $a>0$.  $\O\subset \mathbb{R}_+^{n+1}$ is said to satisfy the $(\eta,a)$-uniform interior cone condition, if for every $x\in\p\O$, there exists a unit vector $\omega_x$ such that the cone with vertex at the origin, axis $\omega_x$, opening angle $\eta$, and height $a$, defined as
    \begin{align*}
        C_{\omega_x,a}
        =\{y:\left<y,\omega_x\right>>\vert y\vert\cos\eta, \vert y\vert<a\}
    \end{align*}
    is such that
    \begin{align*}
        v+C_{\omega_x,a}\subset \O,\quad\forall v\in B_a(x)\cap\overline{\O}.
    \end{align*}
\end{definition}
The uniform interior cone condition is equivalent to the Lipschitz regularity of the domain $\O$, see \cite[Section 3]{MP22} and the references therein for a detailed account.



Next we introduce a uniform exterior $\theta$-spherical cap condition, which will be used for deriving gradient estimate for the solutions to \eqref{eq-mixed-halfspace0}. It could be regarded as the capillary variant of the classical uniform exterior sphere condition. Such condition is motivated by \cite[Definition 4.6]{Pog22}, where a relevant uniform exterior sphere condition was proposed for domains in a cone.  
\begin{definition}[{$r_e$-uniform exterior $\theta$-spherical cap condition}]\label{Defn-exterior-sphere}
    \normalfont
Let $\O\subset\mfR^{n+1}_+$  and $\S=\overline{\p\Omega\cap\mfR^{n+1}_+}$.
Given $\theta\in(0,\pi)$ and $r_e>0$,
 \textit{$\Omega$ is said to satisfy the $r_e$-uniform exterior $\theta$-spherical cap condition},
if for any $x\in\Sigma$, there exists a touching ball  $B_{r_e}(O_x)$ of radius $r_e$ and center $O_x$ such that
\begin{enumerate}
        \item $\overline{ B_{r_e}(O_x)}\cap \overline{\Omega}=\{x\}$,
        \item $\left<O_x,E_{n+1}\right>\geq r_e\cos\theta$.
\end{enumerate}
\end{definition}
\begin{remark}
    \normalfont
   \begin{itemize}
   
   \item[(i)] Condition (1) in \cref{Defn-exterior-sphere} simply means that the sphere we find touches $\S$ from outside. Condition  (2) is the key feature of our definition, since we propose a height control of the touching sphere. 
   \item[(ii)] When $\theta=\frac{\pi}{2}$, the condition reduces to the one defined by Poggesi \cite[Definition 4.6]{Pog22}.
   \item[(iii)] Let $\O\subset\mfR^{n+1}_+$   be such that the contact angle $\tilde\theta(x)$ at  each $x\in \Gamma$  satisfies
    $0<\tilde\theta(x)\leq \theta$ for some $\theta\in(0,\pi)$. Then $\O$ satisfies the $r_e$-uniform exterior $\theta$-spherical cap condition for some $r_e>0$. See \cref{nat-exterior-sphere}.
    \end{itemize}
\end{remark}

We shall use $d_\O$ to denote the diameter of $\O$ and $\|\cdot\|_{1,\S}$ to denote $L^1$-norm on $\S$. Our second result is the following quantitative stability for Serrin-type result (\cref{Thm-rigid-BVP-halfspace}).
\begin{theorem}\label{Thm-stabi-OverDetermined-halfspace}
Let $\O\subset\mathbb{R}_+^{n+1}$ satisfy the $(\eta,a)$-uniform interior cone condition and the $r_e$-uniform exterior $\theta$-spherical cap condition for some $\eta\in(0,\pi/2]$, $a>0$, $\theta\in(0,\pi/2], r_e>0$.
Let $f\in W^{1,\infty}(\O)\cap W^{2,2}(\O)$ be a solution to \eqref{eq-mixed-halfspace0} with some $c\leq 0$. Let $O$ be chosen as
\begin{align}\label{defn-O-halfspace}
  \frac{1}{\abs{\O}}\bigg(\int_{\O}x\rd x-(n+1)\int_{T}f\rd A\cdot E_{n+1}\bigg),
\end{align}
and $R(c,\O)$ be given by \eqref{defn-ref-radius1}.

Then there exists some constant $C=C(n,c,d_\O,\eta,a, \theta,r_e)>0$ such that
\begin{align*}
    \rho_e-\rho_i
    \leq C
\begin{cases}
\Norm{f_\nu^2-\left(\frac{R(c,\O)}{n+1}\right)^2}^{\frac{1}{n+1}}_{1,\S},\quad&n\geq2,\\
\Norm{f_\nu^2-\left(\frac{R(c,\O)}{n+1}\right)^2}^{\frac{1}{2}}_{1,\S}\max\left\{\log\left(\Norm{f_\nu^2-\left(\frac{R(c,\O)}{n+1}\right)^2}^{-\frac{1}{2}}_{1,\S}\right),1\right\},
\quad&n=1.
\end{cases}
\end{align*}
\end{theorem}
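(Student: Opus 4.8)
The plan is to run the by-now standard Magnanini–Poggesi machinery adapted to the capillary setting, using the integral identity \eqref{int-iden1} as the engine.

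\textbf{Step 1 (From the integral identity to an $L^2$ deficit of the Hessian).} Start from \eqref{int-iden1}. Since $c\le 0$ forces $f\le 0$ in $\O$ by the maximum principle (Remark (ii)), the left-hand side is nonnegative. The right-hand side is controlled by $\|f_\nu^2-(R(c,\O)/(n+1))^2\|_{1,\S}$ times an $L^\infty$ bound on $f_\nu$ and on $\langle (x+(n+1)cE_{n+1})/(n+1),\nu\rangle$ over $\S$; the former is bounded using the $r_e$-uniform exterior $\theta$-spherical cap condition (this is exactly what that condition is introduced for, as announced before \cref{Defn-exterior-sphere}), and the latter by $d_\O$. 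Hence
\begin{align*}
\int_{\Omega}(-f)\Big\{\,\abs{\bar\nabla^2 f}^2-\tfrac{(\bar\De f)^2}{n+1}\,\Big\}\,\rd x\ \le\ C(n,c,d_\O,\theta,r_e)\,\Norm{f_\nu^2-\big(\tfrac{R(c,\O)}{n+1}\big)^2}_{1,\S}.
\end{align*}

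\textbf{Step 2 (A weighted-to-unweighted Poincaré-type passage).} The weight $(-f)$ degenerates near $\S=\{f=0\}$, so the next task is to remove it and produce a genuine $L^2(\O)$ estimate for $\bar\nabla^2 f-\frac{\bar\De f}{n+1}\,\mathrm{Id}=\bar\nabla^2 f-\frac{1}{n+1}\mathrm{Id}$, i.e.\ for $\bar\nabla^2\big(f-\tfrac{|x-O|^2}{2(n+1)}\big)$. Here one uses that $-f$ is comparable to the distance to $\S$ from below — which is where the $(\eta,a)$-uniform interior cone condition enters, giving a Hardy–Poincaré inequality with the distance weight (as in \cite{MP19,MP22}); the mixed Neumann condition on $T$ is harmless since $T\not\subset\S$. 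This yields
\begin{align*}
\Norm{\bar\nabla^2 f-\tfrac{1}{n+1}\mathrm{Id}}_{L^2(\Omega)}^2\ \le\ C\,\Norm{f_\nu^2-\big(\tfrac{R(c,\O)}{n+1}\big)^2}_{1,\S}.
\end{align*}

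\textbf{Step 3 (From Hessian deficit to closeness of the harmonic-type function, then to $\rho_e-\rho_i$).} Set $h(x)=f(x)-\tfrac{|x-O|^2}{2(n+1)}$; then $\bar\Delta h=0$ and $\|\bar\nabla^2 h\|_{L^2(\O)}$ is small by Step 2. A Poincaré inequality (again using the cone condition / Lipschitz regularity of $\O$) bounds $\|\bar\nabla h-\text{(its mean)}\|_{L^2}$ and then, crucially, one shows the mean of $\bar\nabla h$ is itself small: this is where the specific choice of center $O$ in \eqref{defn-O-halfspace} is engineered — it is exactly the choice that makes $\int_\O \bar\nabla h\,\rd x=0$ after integrating by parts and using $\bar\nabla_{\bar N}f=c$ on $T$ and $f=0$ on $\S$. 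Harmonicity of $h$ then upgrades the $L^2$-smallness of $\bar\nabla h$ to a pointwise $L^\infty$ bound on $\S$ on compact subsets via interior estimates, and a boundary argument (using the exterior spherical cap condition to get a clean geometry near $\S$, together with $f=0$ on $\S$) transfers this to an oscillation bound for $|x-O|^2/(2(n+1)) + h(x) \equiv 0$ on $\S$, i.e.\ for $|x-O|$ on $\S$. Concretely, on $\S$ one has $\tfrac{|x-O|^2-(n+1)^2 c_0^2}{2(n+1)} = -h(x)$ for the appropriate constant, so $\big|\,|x-O|^2 - (n+1)^2 c_0^2\,\big|\le C\|h\|_{L^\infty(\S)}$, whence $\rho_e-\rho_i\le C\|h\|_{L^\infty(\S)}$. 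Tracking the exponents through the elliptic estimates (Sobolev embedding in dimension $n+1$: the $L^2$-to-$L^\infty$ gain costs a power depending on $n$, and in the borderline case $n=1$ one loses a logarithm) produces precisely the stated $\tfrac{1}{n+1}$ power for $n\ge2$ and the $\tfrac12$-power-with-log for $n=1$.

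\textbf{Main obstacle.} The delicate point is Step 3 — the boundary regularity/transfer argument near $\S$, where $\S$ is only assumed Lipschitz (and meets $T$ along $\Gamma$ at a possibly non-right angle). One must carefully combine the two geometric conditions: the interior cone condition to run the Hardy–Poincaré and interior elliptic estimates, and the exterior $\theta$-spherical cap condition to bound $|\bar\nabla f|$ on $\S$ (needed already in Step 1) and to control how $h$ behaves up to $\S$. Keeping the constant $C$ depending only on $n,c,d_\O,\eta,a,\theta,r_e$ — and in particular uniform as one approaches $\Gamma$ — is the part that requires the most care; I would expect this to follow the template of \cite[Sections 3–4]{MP22} and \cite{Pog22}, with the capillary contact angle $\theta$ entering only through the exterior spherical cap condition and the $L^\infty$ gradient bound it provides.
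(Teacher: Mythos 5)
Your overall strategy (Magnanini--Poggesi machinery, comparing $f$ with the quadratic potential, center $O$ chosen to zero out $\int_\O\bar\nabla(h-f)\,\rd x$, gradient estimate from the exterior $\theta$-spherical cap condition) matches the paper's, and Step 1 is essentially correct. But Step 2 contains a genuine error. You claim that the Hardy--Poincar\'e-type inequality lets you \emph{remove} the degenerate weight $-f$ and produce the unweighted bound $\|\bar\nabla^2 f - \tfrac{1}{n+1}\mathrm{Id}\|_{L^2(\O)}^2 \le C\|f_\nu^2-(R(c,\O)/(n+1))^2\|_{1,\S}$. This is not what \cref{Cor-MP22-3-2} provides, and no such weight removal is possible: from $\int_\O(-f)|\bar\nabla^2\phi|^2\,\rd x\le\epsilon$ and the elementary comparison $-f\ge\tfrac{1}{2(n+1)}\delta_{\p\O}^2$ (\cref{ineq-MP22-4.1}, proved by touching with a torsion potential in an interior ball, not via the cone condition as you suggest) one only gets $\int_\O\delta_{\p\O}^2|\bar\nabla^2\phi|^2\,\rd x\le C\epsilon$; the Hardy--Poincar\'e inequality with $\alpha=1$ then bounds $\|\bar\nabla\phi\|_{2,\O}$ directly by $\|\delta_{\p\O}\bar\nabla^2\phi\|_{2,\O}$, i.e.\ the \emph{weighted} Hessian norm, never the unweighted one. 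The extra Poincar\'e step at the start of your Step 3 is therefore both unnecessary and built on a false premise: the correct chain lands immediately on $\|\bar\nabla\phi\|_{2,\O}$.

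The second half of your Step 3 also departs from the paper and is left unsubstantiated. You propose upgrading $L^2$-smallness of $\bar\nabla h$ to $L^\infty$ oscillation control via harmonicity and interior elliptic estimates plus a boundary transfer near $\S$; you yourself flag the transfer up to the Lipschitz boundary (and near $\Gamma$) as the main obstacle. The paper sidesteps this entirely by invoking the purely geometric interpolation estimate \cref{Lem-MP22-Lem-3-4}, which under the interior cone condition alone gives $\max_{\overline\O}\phi-\min_{\overline\O}\phi\le C\|\bar\nabla\phi\|_{2,\O}^{2/(n+1)}\|\bar\nabla\phi\|_{\infty,\O}^{(n-1)/(n+1)}$ for $n\ge2$ (with a log for $n=1$). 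Combined with the pointwise Lipschitz bound $|\bar\nabla\phi|\le d_\O/(n+1)+2L$ from \cref{eq-Lipschitz-phi} and the gradient estimate \cref{Prop-GE-Halfspace} for $L$, this yields the stated exponents with no elliptic regularity near the boundary required. Your route (harmonicity plus interior estimates) echoes the earlier Magnanini--Poggesi papers, but the version followed here is cleaner precisely because it avoids that boundary analysis; if you insist on the elliptic route, you must actually supply the boundary argument you label as the obstacle.
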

Recall that in \cite{JXZ22}, we utilized the solution to \cref{eq-mixed-halfspace0} in Reilly's formula to reprove Wente's Alexandrov-type theorem for capillary CMC hypersurfaces in the half-space. Indeed, applying the Reilly's formula on the solution to \cref{eq-mixed-halfspace0} yields the following integral identity: 
\begin{align}\label{iden-integral-meancurvature-2}
&\int_\Omega\left\{\vert\bar\nabla^2f\vert^2-\frac{(\bar\De f)^2}{n+1}\right\}\rd x+\frac{n}{R(\theta,\O)}\int_\S\left(f_\nu-\frac{R(\theta,\O)}{n+1}\right)^2\rd A\notag\\
=&\int_\S(H-H(\theta,\O))f_\nu^2\rd A,
\end{align} 
where $R(\theta,\O)$ and $H(\theta,\O)$ are the $\theta$-reference radius and $\theta$-reference mean curvature given by \eqref{defn-ref-radius11} and \eqref{defn-ref-meancurvature1} respectively.
Relying on this identity and the similar analysis as in the proof of \cref{Thm-stabi-OverDetermined-halfspace}, we obtain the following quantitative stability for this Alexandrov-type theorem.
\begin{theorem}\label{Thm-stabi-Meancurvature-halfspace}
Let $\O\subset\mathbb{R}_+^{n+1}$ satisfy the $(\eta,a)$-uniform interior cone condition and the $r_e$-uniform exterior $\theta$-spherical cap condition for some $\eta\in(0,\pi/2]$, $a>0$, $\theta\in(0,\pi/2], r_e>0$.
Assume that $\S=\overline{\p\O\cap \mfR^{n+1}_+}$ meets $\p\mfR^{n+1}_+$ at the constant contact angle $\theta\in(0,\pi/2]$.
Let $O$ be the center chosen as in \eqref{defn-O-halfspace}. 
Then there exists some constant $C=C(n,\abs{T},\abs{\Gamma},d_\O,\eta, a, \theta,r_e)>0$ such that
\begin{align*}
    \rho_e-\rho_i
    \leq C
\begin{cases}
\Norm{H-H(\theta,\O)}^{\frac{1}{n+1}}_{1,\S},\quad&n\geq2,\\
\Norm{H-H(\theta,\O)}^{\frac{1}{2}}_{1,\S}\max\left\{\log\left(\Norm{H-H(\theta,\O)}^{-\frac{1}{2}}_{1,\S}\right),1\right\},
\quad&n=1,
\end{cases}
\end{align*}
where $H$ is the mean curvature function of $\S$ and $H(\theta,\O)$ is the $\theta$-reference mean curvature given by \eqref{defn-ref-meancurvature1}.
\end{theorem}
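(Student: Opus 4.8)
The plan is to run the Magnanini--Poggesi scheme, now driven by the Reilly-type identity \eqref{iden-integral-meancurvature-2} in place of \eqref{int-iden1}; once \cref{Thm-stabi-OverDetermined-halfspace} is proved, essentially all the needed tools (the uniform gradient estimate, the Poincaré inequality, and the harmonic-function stability lemma) are already available. Write $\delta\coloneqq\Norm{H-H(\theta,\O)}_{1,\S}$, set $q(x)\coloneqq\frac{\abs{x-O}^2-R(\theta,\O)^2}{2(n+1)}$ with $O$ as in \eqref{defn-O-halfspace}, and let $h\coloneqq f-q$, which is harmonic in $\O$ since $\bar\Delta f=1$. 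A direct computation using $\bar\Delta f=1$ gives $\abs{\bar\nabla^2 f}^2-\frac{(\bar\Delta f)^2}{n+1}=\abs{\bar\nabla^2 h}^2\ge0$ (the Newton inequality), while the other integrand on the left of \eqref{iden-integral-meancurvature-2} is manifestly a square (note $R(\theta,\O)>0$ because $c\le0$); hence both terms are bounded by the right-hand side, which is $\le\Norm{f_\nu}_{L^\infty(\S)}^2\,\delta\le\Norm{\bar\nabla f}_{L^\infty(\O)}^2\,\delta$. The factor $\Norm{\bar\nabla f}_{L^\infty(\O)}\le C(n,\theta,r_e,d_\O)$ comes from the capillary barrier estimate behind \cref{Thm-stabi-OverDetermined-halfspace}, which is exactly where the $r_e$-uniform exterior $\theta$-spherical cap condition (\cref{Defn-exterior-sphere}) enters. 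Along the way one also records two-sided bounds $0<c_1\le R(\theta,\O)\le c_2$ and $\abs{H(\theta,\O)}\le c_3$, with $c_i=c_i(n,\abs{T},\abs{\Gamma},d_\O,\eta,a,\theta)$: the interior cone condition bounds $\abs{\O}$ and $\abs{\S}$ from below, while $d_\O$ bounds the relevant geometric quantities from above. We thus obtain $\Norm{\bar\nabla^2 h}_{L^2(\O)}^2\le C\delta$ (and, if needed, $\int_\S\big(f_\nu-\frac{R(\theta,\O)}{n+1}\big)^2\rd A\le C\delta$).

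Next, since $f=0$ on $\S$ we have $\abs{x-O}^2=R(\theta,\O)^2-2(n+1)h(x)$ on $\S$, whence $\rho_e^2-\rho_i^2=2(n+1)\,\mathrm{osc}_\S h$; together with a lower bound $\rho_e+\rho_i\ge\rho_e\ge c(n,\abs{T},\eta,a,d_\O)>0$ (the interior cone condition keeps $\O$, hence $\S$, from degenerating) this yields $\rho_e-\rho_i\le C\,\mathrm{osc}_\S h$. It remains to bound $\mathrm{osc}_\S h$, equivalently $\Norm{h-h_\O}_{L^\infty(\S)}$ with $h_\O$ the mean of $h$ over $\O$, by $\Norm{\bar\nabla^2 h}_{L^2(\O)}$. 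The center $O$ in \eqref{defn-O-halfspace} is tailored so that $\bar\nabla h$ has vanishing mean over $\O$ (equivalently, so that $O$ is the center of the model spherical cap), which makes the Poincaré chain $\Norm{h-h_\O}_{L^2(\O)}\le C_1\Norm{\bar\nabla h}_{L^2(\O)}\le C_2\Norm{\bar\nabla^2 h}_{L^2(\O)}$ available, with $C_1,C_2$ governed by the Lipschitz character of $\O$, i.e.\ by $n,\eta,a,d_\O$. Finally, the harmonic-function stability lemma of Magnanini--Poggesi \cite{MP22} upgrades this interior $L^2$-bound to
\[
\Norm{h-h_\O}_{L^\infty(\S)}\le C\begin{cases}\Norm{h-h_\O}_{L^2(\O)}^{\frac{2}{n+1}},&n\ge2,\\ \Norm{h-h_\O}_{L^2(\O)}\max\big\{\log\big(\Norm{h-h_\O}_{L^2(\O)}^{-1}\big),1\big\},&n=1,\end{cases}
\]
by coupling interior estimates for harmonic functions (sub-mean-value, Caccioppoli) with the uniform interior cone condition to carry the bound up to $\S$ and across the edge $\Gamma$, and optimizing over the interior scale — this is where the exponent $\frac1{n+1}$ and the $n=1$ logarithmic loss originate. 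Chaining the three estimates (recall $\Norm{\bar\nabla^2 h}_{L^2(\O)}^2\le C\delta$) gives $\rho_e-\rho_i\le C\delta^{1/(n+1)}$ for $n\ge2$ and the stated log-corrected bound for $n=1$.

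The scheme is, if anything, cleaner than for \cref{Thm-stabi-OverDetermined-halfspace}, since \eqref{iden-integral-meancurvature-2} carries no $(-f)$-weight and feeds directly into an unweighted Poincaré inequality. The two genuinely delicate points are: (i) the uniform gradient estimate $\Norm{\bar\nabla f}_{L^\infty(\O)}\le C(n,\theta,r_e,d_\O)$ — the classical exterior-sphere barrier ignores the oblique (capillary) condition $\bar\nabla_{\bar N}f=c$ on $T$, so one instead uses a barrier modeled on a spherical cap meeting $\p\mfR^{n+1}_+$ at angle $\theta$ (the purpose of \cref{Defn-exterior-sphere}) and checks it dominates $f$ near $\S$ while remaining compatible with the corner $\Gamma$; and (ii) the bookkeeping ensuring the final constant depends only on $n,\abs{T},\abs{\Gamma},d_\O,\eta,a,\theta,r_e$ rather than on $\O$ itself, which rests on the two-sided control of $R(\theta,\O)$ and $H(\theta,\O)$ and of the Poincaré and harmonic-stability constants purely in terms of $(\eta,a)$ and $d_\O$. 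I expect (i) to be the main obstacle, being the one point that is genuinely new relative to the classical Euclidean theory; the remainder is a faithful adaptation of the argument for \cref{Thm-stabi-OverDetermined-halfspace} and of \cite{MP22}.
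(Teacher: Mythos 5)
Your proposal is correct and follows essentially the same Magnanini--Poggesi scheme as the paper: take $\alpha=0$ in the Hardy--Poincar\'e step, use the Reilly-type identity \eqref{iden-integral-meancurvature-1} (note the sign on the right-hand side of \eqref{iden-integral-meancurvature-2} should be $H(\theta,\O)-H$, a harmless typo since both sides are taken in absolute value) to bound $\int_\O\bar\Delta P\,\rd x\le L^2\Norm{H-H(\theta,\O)}_{1,\S}$, and control $L=\Norm{\bar\nabla f}_{\infty,\O}$ geometrically via \cref{Prop-GE-Halfspace}. The only cosmetic deviation is that you route through $\Norm{h-h_\O}_{L^2(\O)}$ and a harmonic-function oscillation lemma, whereas the paper applies the general Lipschitz-function estimate \cref{Lem-MP22-Lem-3-4} directly to $\phi$ and closes the $\rho_e$ lower bound by the elementary fact $d_\O\le 4\rho_e$; both give the same exponent $\frac{1}{n+1}$ (and the $n=1$ log correction).
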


The proofs of \cref{Thm-stabi-OverDetermined-halfspace} and \cref{Thm-stabi-Meancurvature-halfspace} follow from Magnanini-Pogessi's idea \cite{MP22} by utilizing the integral identities \eqref{int-iden1} and \eqref{iden-integral-meancurvature-2}. We remark that in \cite{MP22}, the constant in the quantitative stability estimate depends on the Lipschitz norm for the solution of the corresponding mixed boundary value problem. In our proof, we derive the gradient estimate for the solution to \eqref{eq-mixed-halfspace0} in terms of geometric quantities of $\O$,  by utilizing the uniform exterior $\theta$-spherical cap condition. As a result, the constants in \cref{Thm-stabi-OverDetermined-halfspace} and \cref{Thm-stabi-Meancurvature-halfspace} depend only on geometric quantities of $\O$.

\

\noindent{\bf Organization of the paper.}
In \cref{Sec-2}, we recall some useful tools for the study of capillary problems and introduce some capillary geometric quantities.
In \cref{Sec-3} we prove the key integral identities and consequently, we prove the rigidity result (\cref{Thm-rigid-BVP-halfspace}).
In \cref{Sec-4} we first establish some PDE requisites for our purpose and then we prove the quantitative stability results (\cref{Thm-stabi-OverDetermined-halfspace}, \cref{Thm-stabi-Meancurvature-halfspace}).
As a by-product, the stability result for the Heintze-Karcher-type inequality for capillary hypersurfaces (\cref{Thm-stabi-HK-halfspace}) is also achieved.


\section{Preliminaries}\label{Sec-2}
Let $\O\subset\mfR^{n+1}_+$ be a bounded domain with piece-wise smooth boundary $\p\O=\S\cup T$, where $\S:=\overline{\p\O\cap \mfR^{n+1}_+}$ is a compact smooth hypersurface in $\overline{ \mfR^{n+1}_+}$, $T:=\p\Omega\cap\p \mfR^{n+1}_+$ and $\S$ meets $T$ at an $(n-1)$-dimensional submanifold $\Gamma:=\S\cap T$. 
We use the following notation for normal vector fields. Let $\nu$ and $\bar N:=-E_{n+1}$ be the outward unit normal to $\S$ and $\p\mfR^{n+1}_+$ (with respect to $\O$) respectively.  Let $\mu$ be the outward unit co-normal to $\Gamma\subset \S$ and $\bar \nu$ be the outward unit co-normal to $\Gamma\subset T$. Under this convention, along $\Gamma$ the bases $\{\nu,\mu\}$ and $\{\bar \nu,\bar N\}$ have the same orientation in the normal bundle of $\p \S\subset \mfR^{n+1}$. In particular, we say that $\S$ meets $\p\mfR^{n+1}_+$ with a fixed contact angle $\theta$ if along $\Gamma$, \begin{align}\label{munu}
		&&\mu=\sin \theta \bar N+\cos\theta \bar \nu,\quad \nu=-\cos \theta \bar N+\sin \theta \bar \nu.
	\end{align}	
	We denote by $\bar \n$, $\bar \Delta$, $\bar \n^2$ and $\bar{\rm div}$, the gradient, the Laplacian, the Hessian and the divergence on $\mfR^{n+1}$ respectively, while by $\n$, $\Delta$, $\n^2$ and ${\rm div}$, the gradient, the Laplacian, the Hessian and the divergence on the smooth part of $\p \O$, respectively. Let $g$, $h$ and $H$ be the first, second fundamental forms and the mean curvature  of the smooth part of $\p \O$ respectively. Precisely, $h(X, Y)=\<\bar\n_X\nu, Y\>$ and $H={\rm tr}_g(h)$.

We recall the classical $P$-function related to the solution $f$ of \eqref{eq-mixed-halfspace0}. Precisely, letting $$P=\frac{1}{2}|\bar \nabla f|^2- \frac{1}{n+1}f,$$ a direct computation gives
  \begin{align*}
   \bar\Delta P
   =|{\bar\nabla}^2 f|^2-\frac{(\bar\Delta f)^2}{n+1}.
 \end{align*}
 Using the Cauchy-Schwarz inequality, one finds
 \begin{align*}
     \bar\De P\geq0,
 \end{align*}
 and equality holds if and only if  $\bar\nabla^2f$ is proportional to the identity matrix.

We need the following version of Reilly's formula, we refer to \cite{JXZ22} and also \cite{Reilly77} for the details of the proof and the derivation of these formulas.
\begin{proposition}[{\cite{Reilly77}, \cite[Proposition 2.1]{JXZ22}}]
Let $f\in W^{1,\infty}(\O)\cap W^{2,2}(\O)$. We have
\begin{align}\label{formu-reilly1}
			&&\int_\O  (\bar\Delta f)^2-\left|\bar\n^2 f\right|^2 \rd x
			=\int_{\p\O}  f_\nu \De f-g(\n f_\nu, \n f) +H f_\nu^2 +h(\n f, \n f)\rd A. 
\end{align}
\end{proposition}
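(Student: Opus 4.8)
The plan is to establish \eqref{formu-reilly1} first for smooth $f$ by exhibiting an explicit divergence structure, and then to recover the stated regularity by a routine approximation. First I would pass to Cartesian coordinates on $\mfR^{n+1}$, write $f_i=\p_i f$ and $f_{ij}=\p_i\p_j f$, and introduce on $\O$ the vector field $X$ with components $X_i=(\bar\Delta f)\,f_i-f_{ij}f_j$, i.e.\ $X=(\bar\Delta f)\bar\n f-\bar\n^2 f(\bar\n f,\cdot)$. Since partial derivatives commute in flat space, $\p_i f_{ij}=\p_j(\bar\Delta f)$, and a short computation shows that the two first-order terms cancel, leaving $\bar{\rm div}\,X=(\bar\Delta f)^2-\abs{\bar\n^2 f}^2$. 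Applying the divergence theorem and using $\<X,\nu\>=(\bar\Delta f)f_\nu-\bar\n^2 f(\bar\n f,\nu)$ on $\p\O$, this already gives
\begin{align*}
\int_\O (\bar\Delta f)^2-\abs{\bar\n^2 f}^2\,\rd x=\int_{\p\O}\left[(\bar\Delta f)f_\nu-\bar\n^2 f(\bar\n f,\nu)\right]\rd A .
\end{align*}

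The remaining work is boundary bookkeeping: rewrite $\bar\n^2 f(\bar\n f,\nu)$ along $\p\O$ in terms of $\De f$, $\n f_\nu$, $h$ and $H$. I would split $\bar\n f=\n f+f_\nu\nu$ into its tangential and normal parts, so that $\bar\n^2 f(\bar\n f,\nu)=\bar\n^2 f(\n f,\nu)+f_\nu\,\bar\n^2 f(\nu,\nu)$. For the mixed term, differentiating the identity $f_\nu=\<\bar\n f,\nu\>$ along a tangent vector $Y$ and using the Weingarten relation $\<\bar\n_Y\nu,\n f\>=h(Y,\n f)$ yields $\bar\n^2 f(Y,\nu)=\<\n f_\nu,Y\>-h(Y,\n f)$; choosing $Y=\n f$ gives $\bar\n^2 f(\n f,\nu)=g(\n f_\nu,\n f)-h(\n f,\n f)$. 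For the normal term, I would invoke the standard decomposition of the ambient Laplacian on $\p\O$, namely $\bar\Delta f=\De f+Hf_\nu+\bar\n^2 f(\nu,\nu)$ with $H={\rm tr}_g h$ and $\nu$ outward, to replace $\bar\n^2 f(\nu,\nu)=\bar\Delta f-\De f-Hf_\nu$. Substituting both expressions into the boundary integrand, the two occurrences of $(\bar\Delta f)f_\nu$ cancel and one is left with exactly $f_\nu\De f-g(\n f_\nu,\n f)+Hf_\nu^2+h(\n f,\n f)$, which is the right-hand side of \eqref{formu-reilly1}.

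Finally, to handle $f\in W^{1,\infty}(\O)\cap W^{2,2}(\O)$ I would approximate $f$ by smooth functions $f_k$ --- for instance mollifications of a Lipschitz extension of $f$ --- converging to $f$ in $W^{2,2}(\O)$ with $\norm{\bar\n f_k}_{L^\infty(\O)}$ uniformly bounded, apply the identity already proved to each $f_k$, and pass to the limit: the left-hand side is continuous under $W^{2,2}$-convergence, and the boundary terms pass to the limit using the trace theorem together with the uniform gradient bound and the interior smoothness of $f$ on the smooth pieces $\S$ and $T$. I expect this last point to be the only genuine obstacle --- namely, controlling the passage to the limit in the second-order boundary quantities $\De f$ and $\n f_\nu$ near the edge $\Gamma=\S\cap T$, where $\O$ is merely Lipschitz; the interior divergence computation and the tangential/normal algebra are entirely routine. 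This is precisely the analysis carried out in \cite[Proposition 2.1]{JXZ22}, following the classical argument of \cite{Reilly77}.
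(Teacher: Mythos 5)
The paper does not prove this proposition itself but defers to \cite{Reilly77} and \cite[Proposition 2.1]{JXZ22}; your derivation---the divergence identity for $X=(\bar\Delta f)\bar\n f-\bar\n^2 f(\bar\n f,\cdot)$, followed by the tangential/normal splitting of the boundary term via the Weingarten relation and the decomposition $\bar\Delta f=\De f+Hf_\nu+\bar\n^2 f(\nu,\nu)$---is precisely the classical argument those references use, and your boundary algebra checks out. The regularity caveat you raise near $\Gamma$ is the right one to flag, and it is exactly what \cite[Appendix A]{JXZ22} addresses.
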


We recall the following PDE lemmas that are essentially used in the quantitative analysis, which are derived in \cite{MP22}. The statement we adopt here is a slightly simplified version of \cite[Corollary 3.2, Lemma 3.4]{MP22} (by taking $\alpha=0,1; p=r=2; q=\infty$), which are sufficient for our purposes.We shall use $\|\cdot\|_{k,\O}$ to denote $L^k$-norm on $\O$ for $k=2,\infty$, and set $\delta_{\p\O}(x):={\rm dist}(x,\p\O)$.

{\color{black}

\begin{lemma}[{\cite[Corollary 3.2]{MP22}}, Hardy-Poincar\'e-type inequality]\label{Cor-MP22-3-2}
Let $\O\subset\mfR^{n+1}$ be a bounded domain satisfying the $(\eta,a)$-uniform interior cone condition for some $\eta\in(0,\pi/2], a>0$.
Let $\alpha=0$ or $1$. For a function $\phi$ satisfying $$\bar\nabla\phi \in L^1_{loc}(\O),\quad \delta_{\partial \O}^\alpha\bar\nabla^2\phi\in L^2(\O),$$
and
\begin{align*}
    \int_{\O}\bar\nabla \phi \rd x=\vec0,
\end{align*}
it holds that
\begin{align}\label{ineq-Hessian-phi}
    \left\|\bar\nabla \phi\right\|_{2,\O}
    \leq C(n, d_\O, \eta,a)\left\|\delta^\alpha_{\partial \O} \bar\nabla^2 \phi\right\|_{2,\O}.
\end{align}
\end{lemma}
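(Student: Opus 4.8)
The plan is to obtain both cases $\alpha=0$ and $\alpha=1$ from a single duality argument for the vector field $w:=\bar\nabla\phi$. Note first that the hypothesis $\int_\O\bar\nabla\phi\,\rd x=\vec 0$ already entails $w\in L^1(\O;\mfR^{n+1})$; moreover $\delta_{\partial\O}^\alpha\bar\nabla w=\delta_{\partial\O}^\alpha\bar\nabla^2\phi\in L^2(\O)$, and $w\in W^{1,1}_{loc}(\O)$ because $\delta_{\partial\O}$ is locally bounded below. The only structural facts about $\O$ that enter are that it is a bounded Lipschitz domain whose Lipschitz character --- and hence the constant $C_B$ in the \emph{Bogovskii lemma} and the constant $C_H$ in \emph{Hardy's inequality} on $W_0^{1,2}(\O)$ --- is quantitatively controlled by $n,d_\O,\eta,a$; this is exactly the equivalence between the uniform interior cone condition and Lipschitz regularity recalled above.

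\emph{Core estimate.} Fix $\psi\in L^2(\O;\mfR^{n+1})$ with $\norm{\psi}_{2,\O}=1$, and set $g:=\psi-\tfrac{1}{\abs\O}\int_\O\psi\,\rd x$, so that $\int_\O g\,\rd x=\vec 0$ and $\norm{g}_{2,\O}\le2$. The Bogovskii construction provides a matrix field $V\in W_0^{1,2}(\O;\mfR^{(n+1)\times(n+1)})$ with $\bar{\rm div}\,V=g$ and $\norm{\bar\nabla V}_{2,\O}+\norm{V}_{2,\O}\le C_B$. Since $\int_\O w\,\rd x=\vec 0$, we may replace $\psi$ by $g$ in $\int_\O w\cdot\psi\,\rd x$, and an integration by parts gives
\begin{align*}
\int_\O w\cdot\psi\,\rd x=\int_\O w\cdot\bar{\rm div}\,V\,\rd x=-\int_\O\bar\nabla w:V\,\rd x=-\int_\O\bigl(\delta_{\partial\O}^\alpha\bar\nabla w\bigr):\bigl(\delta_{\partial\O}^{-\alpha}V\bigr)\,\rd x,
\end{align*}
hence by Cauchy--Schwarz $\bigabs{\int_\O w\cdot\psi\,\rd x}\le\norm{\delta_{\partial\O}^\alpha\bar\nabla w}_{2,\O}\norm{\delta_{\partial\O}^{-\alpha}V}_{2,\O}$. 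For $\alpha=0$ we bound $\norm{\delta_{\partial\O}^{-\alpha}V}_{2,\O}=\norm{V}_{2,\O}\le C_B$; for $\alpha=1$ we invoke Hardy's inequality $\norm{V/\delta_{\partial\O}}_{2,\O}\le C_H\norm{\bar\nabla V}_{2,\O}\le C_HC_B$, available because $V\in W_0^{1,2}(\O)$ and $\O$ is Lipschitz. Taking the supremum over all such $\psi$ yields $\norm{w}_{2,\O}\le C(n,d_\O,\eta,a)\norm{\delta_{\partial\O}^\alpha\bar\nabla w}_{2,\O}$, which is \eqref{ineq-Hessian-phi} upon recalling $w=\bar\nabla\phi$, $\bar\nabla w=\bar\nabla^2\phi$. (For $\alpha=0$ this is of course just the vector-valued Poincar\'e--Wirtinger inequality, which could be quoted componentwise.)

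Two points deserve care. First, $\norm{w}_{2,\O}$ is not a priori finite and the integration by parts must be justified: I would apply the core estimate to the mean-zero componentwise truncations $w^{(M)}$ of $w$ --- which lie in $L^\infty(\O)\cap W^{1,1}_{loc}(\O)$, satisfy $\abs{\bar\nabla w^{(M)}}\le\abs{\bar\nabla w}$ a.e.\ by the Sobolev chain rule, and have mean tending to $\vec 0$ by dominated convergence in $L^1(\O)$ --- perform the integration by parts after approximating $V$ by fields in $C_c^\infty(\O)$, and then let $M\to\infty$ via Fatou's lemma; the approximation is legitimate on the right-hand side because, when $\alpha=1$, Hardy's inequality forces $\delta_{\partial\O}^{-1}V_k\to\delta_{\partial\O}^{-1}V$ in $L^2(\O)$ whenever $V_k\to V$ in $W_0^{1,2}(\O)$. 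Second --- and this is the genuine obstacle --- one must pin down the quantitative dependence of $C_B$ and $C_H$ on $(n,d_\O,\eta,a)$: a domain satisfying the $(\eta,a)$-uniform interior cone condition is the union of a controlled number of subdomains each star-shaped with respect to a ball of controlled radius (which yields the Bogovskii bound) and satisfies a uniform exterior cone condition (which yields Hardy's inequality with a controlled constant). Turning these quantitative refinements of otherwise classical facts into explicit bounds is where the actual work lies.
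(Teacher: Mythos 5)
The paper does not prove this lemma; it explicitly imports it as a specialization of \cite[Corollary 3.2]{MP22} (after fixing the exponents), so there is no paper-internal argument to match. Measured instead against the route taken in \cite{MP22}, your proposal is a genuinely different one. Magnanini--Poggesi obtain the $\delta^\alpha$-weighted Poincar\'e inequality directly, by decomposing a John domain into a Boman chain of balls or cubes in the style of Hurri--Syrj\"anen's weighted Poincar\'e inequalities, and then tracking how the chain constant depends on the cone parameters; you instead dualize, solve $\bar{\rm div}\,V=g$ with a Bogovskii potential $V\in W_0^{1,2}(\O)$, and for $\alpha=1$ trade $\|\bar\nabla V\|_{2,\O}$ for $\|V/\delta_{\p\O}\|_{2,\O}$ via Hardy's inequality. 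Structurally your argument is sound --- the componentwise Bogovskii solve, the integration by parts, and the truncation-plus-Fatou step to handle the a priori unknown finiteness of $\|\bar\nabla\phi\|_{2,\O}$ all hold --- and it has the aesthetic advantage of treating $\alpha=0$ and $\alpha=1$ uniformly. But you should be clear-eyed that the deferred ``quantitative refinements'' are of comparable weight to what they replace: bounding the Hardy constant on $W_0^{1,2}(\O)$ in terms of $(\eta,a,d_\O)$ requires passing from the interior cone condition to the Lipschitz (hence exterior-cone) character, and bounding the Bogovskii constant requires essentially the same kind of star-shaped/John decomposition of $\O$ that \cite{MP22} invokes directly. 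So the duality route buys conceptual unification, not a shortcut past the geometric bookkeeping; as a blind reconstruction it is a valid alternative but not a simplification of the proof the authors rely on.
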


\begin{lemma}[{\cite[Lemma 3.4]{MP22}}]\label{Lem-MP22-Lem-3-4}
Let $\O\subset\mfR^{n+1}$ be a bounded domain satisfying the $(\eta,a)$-uniform interior cone condition for some $\eta\in(0,\pi/2], a>0$.
For any $\phi\in W^{1,\infty}(\O)$, there holds
\begin{equation}\label{ineq-maxphi-minphi}
    \max_{\overline \O}\phi-\min_{\overline \O}\phi
    \leq C(n, d_\O,\eta,a)
\begin{cases}
\left\|\bar\nabla \phi\right\|_{2,\O}^{\frac{2}{n+1}}\left\|\bar\nabla \phi\right\|_{\infty,\O}^{\frac{n-1}{n+1}},&\quad n\geq2,\\
\norm{\bar\nabla\phi}_{2,\O}\log\left(e\abs{\O}^{\frac12}\frac{\norm{\bar\nabla\phi}_{\infty,\O}}{\norm{\bar\nabla \phi}_{2,\O}}\right),
&\quad n=1.
\end{cases}
\end{equation}
\end{lemma}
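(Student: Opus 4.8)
The plan is to estimate the oscillation $\max_{\overline\O}\phi-\min_{\overline\O}\phi$ by interpolating between the Lipschitz bound $\norm{\bar\nabla\phi}_{\infty,\O}$ and the $L^2$ bound $\norm{\bar\nabla\phi}_{2,\O}$, using the interior cones to substitute for the interior balls that may be unavailable near $\p\O$. Since the $(\eta,a)$-uniform interior cone condition makes $\O$ a bounded Lipschitz domain, $W^{1,\infty}(\O)\hookrightarrow C^{0,1}(\overline\O)$ and the extrema are attained, say at $x_M,x_m\in\overline\O$. To each such point $x$ I attach the truncated cone $A_x\coloneqq x+C_{\omega_x,a}$; by the cone condition $A_x\subset\O$ (applying it at a nearest boundary point when $x\in\O$ lies within distance $a$ of $\p\O$, and using a full ball $B_a(x)$ otherwise), and $\abs{A_x}=c(n,\eta)a^{n+1}$. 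I then split
\begin{align*}
\phi(x_M)-\phi(x_m)
&=\Big(\phi(x_M)-\bbint_{A_{x_M}}\phi\,\rd y\Big)
+\Big(\bbint_{A_{x_M}}\phi\,\rd y-\bbint_{A_{x_m}}\phi\,\rd y\Big)\\
&\quad+\Big(\bbint_{A_{x_m}}\phi\,\rd y-\phi(x_m)\Big)
\end{align*}
and estimate the three pieces separately.

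The core estimate I would establish is the following pointwise-to-average bound: for $x\in\overline\O$ and any cut-off $\rho'\in(0,a]$,
\begin{align*}
\Abs{\phi(x)-\bbint_{A_x}\phi\,\rd y}
\le \rho'\norm{\bar\nabla\phi}_{\infty,\O}
+C(n,\eta)\norm{\bar\nabla\phi}_{2,\O}
\begin{cases}
(\rho')^{-\frac{n-1}{2}}, &n\ge2,\\
\big(\log(a/\rho')\big)^{\frac12}, &n=1.
\end{cases}
\end{align*}
To prove it I would use $\phi(y)-\phi(x)=\int_0^{\abs{y-x}}\langle\bar\nabla\phi(x+t\omega),\omega\rangle\,\rd t$ with $\omega=(y-x)/\abs{y-x}$, valid for the Lipschitz representative along a.e.\ ray of the cone, average over $y\in A_x$ in polar coordinates centered at $x$, exchange the order of the two radial integrations so that the factor $\abs{y-x}$ cancels, and arrive at an upper bound of the form $C(n,\eta)$ times the average over the directional spherical cap $\{\omega\in\mathbb{S}^n:\langle\omega,\omega_x\rangle>\cos\eta\}$ of $\int_0^a\abs{\bar\nabla\phi(x+t\omega)}\,\rd t$. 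I then split $\int_0^a=\int_0^{\rho'}+\int_{\rho'}^a$: the first part is bounded by $\rho'\norm{\bar\nabla\phi}_{\infty,\O}$, and the second by Cauchy--Schwarz against the weight $t^{n}$, which reproduces $\norm{\bar\nabla\phi}_{2,\O}$ (integrating $t^n\abs{\bar\nabla\phi(x+t\omega)}^2$ over the cap returns the $L^2$ norm over $A_x$) times $\big(\int_{\rho'}^a t^{-n}\,\rd t\big)^{1/2}$, a power of $\rho'$ for $n\ge2$ and a logarithm for $n=1$.

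Applying this at $x_M$ and $x_m$ and optimising the free parameter — choosing $\rho'\simeq(\norm{\bar\nabla\phi}_{2,\O}/\norm{\bar\nabla\phi}_{\infty,\O})^{2/(n+1)}$ when this lies in $(0,a]$, and $\rho'=a$ otherwise (in the latter regime $\norm{\bar\nabla\phi}_{\infty,\O}\le a^{-(n+1)/2}\norm{\bar\nabla\phi}_{2,\O}$, and one absorbs the residue into the constant with the help of $\norm{\bar\nabla\phi}_{2,\O}\le\abs{\O}^{1/2}\norm{\bar\nabla\phi}_{\infty,\O}$) — shows that the first and third pieces are at most $C(n,d_\O,\eta,a)$ times the right-hand side of \eqref{ineq-maxphi-minphi}. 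For the middle piece I would bound the difference of the two cone averages through $\frac{1}{\abs\O}\int_\O\phi$ and invoke a Poincar\'e--Wirtinger inequality $\norm{\phi-\frac{1}{\abs\O}\int_\O\phi}_{2,\O}\le C\norm{\bar\nabla\phi}_{2,\O}$; to make the constant manifestly of the form $C(n,d_\O,\eta,a)$, I would instead join $A_{x_M}$ to $A_{x_m}$ by a chain of $N\le C(n,d_\O,\eta,a)$ pairwise overlapping interior cones of radius comparable to $a$ with bounded overlap multiplicity, apply the scale-explicit Poincar\'e inequality on each single cone, and sum by Cauchy--Schwarz. Either way the middle piece is $\le C(n,d_\O,\eta,a)\norm{\bar\nabla\phi}_{2,\O}$, which is dominated by the claimed right-hand side since $\norm{\bar\nabla\phi}_{2,\O}^{(n-1)/(n+1)}\le\big(\abs\O^{1/2}\norm{\bar\nabla\phi}_{\infty,\O}\big)^{(n-1)/(n+1)}$ (and, for $n=1$, since the logarithmic factor is $\ge1$). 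Adding the three estimates yields \eqref{ineq-maxphi-minphi}.

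I expect the main obstacle to be purely in the bookkeeping of constants: first, guaranteeing that the Poincar\'e-type control of the middle piece has a constant depending only on $(n,d_\O,\eta,a)$ — which hinges on the quantitative equivalence between the uniform interior cone condition and a uniform Lipschitz character, and on covering a connecting path by controlled-overlap cones — and second, the careful treatment of the borderline regime in which the unconstrained optimal cut-off $\rho'$ would exceed the admissible cone height $a$. The remaining steps are the routine polar-coordinate computation and H\"older/Cauchy--Schwarz estimates sketched above.
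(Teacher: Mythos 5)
This lemma is quoted verbatim from \cite[Lemma 3.4]{MP22} and the paper under review gives no proof of its own, so there is no in-paper argument to compare against; I can only assess your proposal on its merits and against the cited reference's methodology.

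Your proof is essentially correct and follows the same interpolation-in-a-cone strategy that Magnanini and Poggesi use. The three-piece decomposition through the cone averages, the potential estimate obtained by writing $\phi(y)-\phi(x)$ as a line integral along rays of the cone and swapping the order of the two radial integrations so that the powers of $\abs{y-x}$ cancel exactly, the split of $\int_0^a$ at a free cut-off $\rho'$ with Cauchy--Schwarz against the weight $t^{n}$, and the final optimization in $\rho'$ (with the borderline regime $\rho'>a$ handled separately) are precisely the right moves, and all the exponents check out. The middle piece is genuinely the only place that needs an external input; your second suggestion --- a chain of boundedly many overlapping interior cones of scale comparable to $a$, together with the scale-explicit Poincar\'e inequality on a single cone --- is the standard way to make the Poincar\'e--Wirtinger constant depend only on $(n,d_\O,\eta,a)$, and it works for domains satisfying a uniform $(\eta,a)$ interior cone condition; you are right that this is where the remaining bookkeeping lives, but there is no conceptual gap. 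One incidental remark: for $n=1$ your Cauchy--Schwarz step naturally produces a factor $\big(\log(a/\rho')\big)^{1/2}$ rather than $\log(a/\rho')$, so after optimization you actually obtain a \emph{stronger} bound with $\log^{1/2}$ in place of $\log$; since the claimed estimate only needs the weaker $\log$, this poses no problem --- it just means your route over-delivers in the one-dimensional case.
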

}

\section{Integral Identities and Rigidity for Partially Overdetermined Problem}\label{Sec-3}

Let us begin by recalling some basic facts that are proved by integration by parts. See for example \cite{AS16} or \cite[Proposition 4.1]{JXZ22}.

\begin{proposition}
    It holds 
\begin{align}\label{eq-conservation-halfspace}
	\int_{\S}\left<\nu, E_{n+1}\right>\rd A=|T|.
\end{align}
Assume that $\S$ meets $\p\mfR^{n+1}_+$ with a constant contact angle $\theta\in(0,\pi)$, then we have
\begin{align}\label{eq-balancing-halfspace}
\int_{\S}H \left<\nu, E_{n+1}\right>\rd A=\sin\theta|\Gamma|,    
\end{align}
and the Minkowski-type formula
\begin{align}\label{eq-Minkowski-halfspace}
    \int_\S [n(1-\cos\theta\<\nu, E_{n+1}\>) -H\<x,\nu\>] \rd A=0.
\end{align}
\end{proposition}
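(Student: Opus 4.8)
The plan is to derive the three identities from the divergence theorem: \eqref{eq-conservation-halfspace} by integrating over the solid domain $\O$, and \eqref{eq-balancing-halfspace}, \eqref{eq-Minkowski-halfspace} by integrating over the hypersurface $\S$ (which has boundary $\Gamma$), supplemented by one application of the flat divergence theorem on $T\subset\p\mfR^{n+1}_+$. Throughout I will use two elementary facts: ${\rm div}_\S\nu=H$, which is just the definition $H={\rm tr}_g h$ with $h(X,Y)=\<\bar\n_X\nu,Y\>$; and, for the position vector field $x$, ${\rm div}_\S x=n={\rm div}_T x$, since $\bar\n_{e_i}x=e_i$. I will also use repeatedly that the tangential gradient on $\S$ of any scalar function is orthogonal to $\nu$, so that ${\rm div}_\S(u\nu)=uH$ for a scalar $u$.

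For \eqref{eq-conservation-halfspace}: since $E_{n+1}$ is a constant field, $\bar{\rm div}\,E_{n+1}\equiv0$, and integrating over $\O$ and splitting $\p\O=\S\cup T$ gives $0=\int_\S\<\nu,E_{n+1}\>\rd A+\int_T\<\bar N,E_{n+1}\>\rd A$; since $\bar N=-E_{n+1}$ on $T$, the second term equals $-\abs T$. For \eqref{eq-balancing-halfspace}: decompose $E_{n+1}=E_{n+1}^\top+\<E_{n+1},\nu\>\nu$ along $\S$, so that ${\rm div}_\S E_{n+1}^\top={\rm div}_\S E_{n+1}-{\rm div}_\S(\<E_{n+1},\nu\>\nu)=0-H\<E_{n+1},\nu\>$. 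Integrating over $\S$ and applying the divergence theorem (with $\mu$ the outward unit co-normal of $\Gamma$ in $\S$) gives $-\int_\S H\<E_{n+1},\nu\>\rd A=\int_\Gamma\<E_{n+1},\mu\>\rd s$; finally the contact-angle relation \eqref{munu}, together with $\bar N=-E_{n+1}$ and $\<E_{n+1},\bar\nu\>=0$ (because $\bar\nu$ is tangent to $\p\mfR^{n+1}_+$), yields $\<E_{n+1},\mu\>=-\sin\theta$ along $\Gamma$, whence \eqref{eq-balancing-halfspace}.

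For the Minkowski-type identity \eqref{eq-Minkowski-halfspace}, I argue in the same spirit with the position field: writing $x=x^\top+\<x,\nu\>\nu$ on $\S$ gives ${\rm div}_\S x^\top=n-H\<x,\nu\>$, so the divergence theorem yields $\int_\S(n-H\<x,\nu\>)\rd A=\int_\Gamma\<x,\mu\>\rd s$. On $\Gamma\subset\p\mfR^{n+1}_+$ one has $\<x,E_{n+1}\>=0$, so \eqref{munu} gives $\<x,\mu\>=\cos\theta\<x,\bar\nu\>$; applying the divergence theorem in the hyperplane $\p\mfR^{n+1}_+$ to the bounded (and Lipschitz, since $\Gamma$ is a smooth submanifold) domain $T$ with $\p T=\Gamma$, and using ${\rm div}_T x=n$, gives $\int_\Gamma\<x,\bar\nu\>\rd s=n\abs T$. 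Hence $\int_\S(n-H\<x,\nu\>)\rd A=n\cos\theta\abs T$, and substituting $\abs T=\int_\S\<\nu,E_{n+1}\>\rd A$ from \eqref{eq-conservation-halfspace} produces exactly \eqref{eq-Minkowski-halfspace}. None of the steps is genuinely difficult; the only point requiring care is the bookkeeping of the co-normal orientations --- in particular that $\{\nu,\mu\}$ and $\{\bar\nu,\bar N\}$ induce the same orientation of the normal bundle of $\Gamma$, which is exactly the convention fixed in \eqref{munu} --- and confirming that $T$ is regular enough for the flat divergence theorem to apply.
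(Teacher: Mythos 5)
Your proof is correct, and it follows the standard divergence-theorem argument that the paper invokes by reference (the text simply cites \cite{AS16} and \cite[Proposition 4.1]{JXZ22} rather than reproving these identities). All three steps check out: $\bar{\rm div}\,E_{n+1}=0$ integrated over $\O$ gives \eqref{eq-conservation-halfspace} once $\bar N=-E_{n+1}$ is used; ${\rm div}_\S E_{n+1}^\top=-H\<E_{n+1},\nu\>$ integrated over $\S$ together with $\<E_{n+1},\mu\>=-\sin\theta$ from \eqref{munu} gives \eqref{eq-balancing-halfspace}; and ${\rm div}_\S x^\top=n-H\<x,\nu\>$ with $\<x,\mu\>=\cos\theta\<x,\bar\nu\>$ on $\Gamma$, the flat divergence theorem on $T$ (giving $\int_\Gamma\<x,\bar\nu\>\rd s=n|T|$), and \eqref{eq-conservation-halfspace} give \eqref{eq-Minkowski-halfspace}. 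The orientation bookkeeping via \eqref{munu} is handled correctly, which is the one place such arguments usually go wrong.
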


Our first observation is that, given  $\O$ of the form $\O=B_r(z)\cap\mfR^{n+1}_+$, the related quadratic function $f(x)=\frac{\vert x-z\vert^2-r^2}{2(n+1)}$ solves \eqref{eq-mixed-halfspace0} for some $c\in\mfR$ with
\begin{align}\label{eq-f-nu}
    f_\nu\equiv\frac{r}{n+1}\text{ on }\S,
\end{align}
and a simple integration by parts yields
\begin{align}\label{eq-R1}
  \frac{r}{n+1}|\S|
  =\int_{\S}f_{\nu}\rd A
  =\int_{\O}\bar\Delta f \rd x+\int_{T}f_{\bar N}\rd A=\vert\O\vert-c\vert T\vert.
\end{align}
Enlightened by this,
we set for a general $\O$ and some $c\in \mfR$,
\begin{align}\label{defn-ref-radius1}
R(c,\O)\coloneqq(n+1)\frac{\vert\Omega\vert-c\vert T\vert}{\vert\S\vert},
\end{align}
which will be referred to as \textit{the reference radius} of $\O$.

Concerning the capillary phenomenon in the half-space, we fix a capillary contact angle $\theta\in(0,\pi)$ and observe that:
for the model case in the upper half-space, i.e., $$\O=B_{r}(-r\cos\theta E_{n+1})\cap \mfR_{+}^{n+1},$$ it is easy to see that $f(x)=\frac{\vert x-(-r\cos\theta E_{n+1})\vert^2-r^2}{2(n+1)}$ solves  \eqref{eq-mixed-halfspace0} on $\O$ with a specific number $c=-\frac{r\cos\theta}{n+1}$.
Moreover, since $\S$ is spherical, we can rewrite the radius $r$ as:
\begin{align*}
r
=\frac{n}{H}
=\frac{n\vert T\vert}{\sin\theta\vert\Gamma\vert},
\end{align*}
where \eqref{eq-conservation-halfspace} and \eqref{eq-balancing-halfspace} are used to derive the last equality.
This motivates the following definition:
\begin{definition}
\normalfont
Given $\theta\in(0,\pi)$, 
we define a constant that depends on the choice of $\theta$ and the domain $\O$ by
\begin{align}\label{defn-c1}
c(\theta,\O)\coloneqq-\frac{n}{n+1}\cot\theta\frac{  |T|}{|\Gamma|}.
\end{align}
\end{definition}
\begin{remark}\label{Rem-c_R}
   \normalfont
Note that for $\O\subset\mfR^{n+1}_+$ such that $\S$ has a fixed contact angle $\theta\in(0,\pi)$ with $\p\mfR^{n+1}_+$, a specific mixed boundary equation \eqref{eq-mixed-halfspace0} with $c=c(\theta,\O)$ is very useful for the study of capillary phenomenon in the half-space. Indeed, an integral approach to the Alexandrov-type theorem is built on such mixed boundary equation, see the proof of \cite[Theorem 1.1]{JXZ22}.

Enlightened by this and in view of \eqref{eq-f-nu}, \eqref{eq-R1},
we set
\begin{align}\label{defn-ref-radius11}
    R(\theta,\O)\coloneqq R(c(\theta,\O),\O)
    =(n+1)\frac{\vert\Omega\vert-c(\theta,\Omega)\vert T\vert}{\vert\S\vert}.
\end{align}
We call such $R(\theta,\O)$ the \textit{$\theta$-reference radius}, and naturally the \textit{$\theta$-reference mean curvature} is defined as
\begin{align}\label{defn-ref-meancurvature1}
    H(\theta,\O)\coloneqq\frac{n}{R(\theta,\O)}.
\end{align}
\end{remark}

We end this subsection by showing that, as claimed in the introduction, if $\S$ has constant mean curvature and meets $\p\mfR^{n+1}_+$ with a fixed contact angle $\theta$, then the mean curvature must be $H(\theta,\Omega)$.
\
\begin{proposition}\label{Prop-H1}
        Given $\theta\in(0,\pi)$,
        let $\O\subset\mfR^{n+1}_+$ such that $\S$ is a CMC hypersurface having fixed contact angle $\theta$ with $\p\mfR^{n+1}_+$.
        There holds that
        \begin{align}\label{eq-H-H1}
            H
            =\frac{n}{n+1}\frac{\vert\S\vert}{\vert\Omega\vert-c(\theta,\O)\vert T\vert}
            =H(\theta,\O).
        \end{align}
\end{proposition}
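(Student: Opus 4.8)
The plan is to prove the two equalities in \eqref{eq-H-H1} separately. The second equality, $\frac{n}{n+1}\frac{\vert\S\vert}{\vert\Omega\vert-c(\theta,\O)\vert T\vert}=H(\theta,\O)$, is nothing but the definitions \eqref{defn-ref-radius11} and \eqref{defn-ref-meancurvature1} unwound: by \eqref{defn-ref-meancurvature1} we have $H(\theta,\O)=n/R(\theta,\O)$, and by \eqref{defn-ref-radius11} we have $R(\theta,\O)=(n+1)\frac{\vert\O\vert-c(\theta,\O)\vert T\vert}{\vert\S\vert}$, so dividing $n$ by this last expression gives exactly the middle term. So this equality requires no work beyond citing the definitions.

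The substance is the first equality: if $H$ is constant along $\S$, then $H=\frac{n}{n+1}\frac{\vert\S\vert}{\vert\O\vert-c(\theta,\O)\vert T\vert}$. First I would apply the Minkowski-type formula \eqref{eq-Minkowski-halfspace}, namely $\int_\S[n(1-\cos\theta\langle\nu,E_{n+1}\rangle)-H\langle x,\nu\rangle]\,\rd A=0$. Since $H$ is constant it pulls out of $\int_\S H\langle x,\nu\rangle\,\rd A$, and by the divergence theorem $\int_\S\langle x,\nu\rangle\,\rd A=\int_\O\bar{\mathrm{div}}(x)\,\rd x+(\text{contribution from }T)$; more precisely, applying the divergence theorem on $\O$ with the vector field $x$ and splitting $\p\O=\S\cup T$, one gets $\int_\S\langle x,\nu\rangle\,\rd A=(n+1)\vert\O\vert-\int_T\langle x,\bar N\rangle\,\rd A=(n+1)\vert\O\vert$, because $\bar N=-E_{n+1}$ and $x_{n+1}=0$ on $T\subset\p\mfR^{n+1}_+$, so $\langle x,\bar N\rangle=0$ there. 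For the other term, $\int_\S[n(1-\cos\theta\langle\nu,E_{n+1}\rangle)]\,\rd A=n\vert\S\vert-n\cos\theta\int_\S\langle\nu,E_{n+1}\rangle\,\rd A=n\vert\S\vert-n\cos\theta\vert T\vert$ by the conservation identity \eqref{eq-conservation-halfspace}. Substituting back into \eqref{eq-Minkowski-halfspace} yields $n\vert\S\vert-n\cos\theta\vert T\vert-H(n+1)\vert\O\vert=0$.

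It then remains to match the constant $\cos\theta\vert T\vert$ against $c(\theta,\O)\vert T\vert$ via \eqref{defn-c1}. By \eqref{defn-c1}, $c(\theta,\O)=-\frac{n}{n+1}\cot\theta\frac{\vert T\vert}{\vert\Gamma\vert}$, so $(n+1)c(\theta,\O)\vert T\vert=-n\cos\theta\frac{\vert T\vert^2}{\sin\theta\vert\Gamma\vert}$. But the balancing identity \eqref{eq-balancing-halfspace} together with the CMC assumption gives $H\int_\S\langle\nu,E_{n+1}\rangle\,\rd A=\sin\theta\vert\Gamma\vert$, i.e.\ $H\vert T\vert=\sin\theta\vert\Gamma\vert$ by \eqref{eq-conservation-halfspace}, hence $\frac{\vert T\vert}{\sin\theta\vert\Gamma\vert}=\frac{1}{H}$. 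Therefore $(n+1)c(\theta,\O)\vert T\vert=-\frac{n\cos\theta\vert T\vert}{H}$, and rearranging $n\vert\S\vert-n\cos\theta\vert T\vert=H(n+1)\vert\O\vert$ as $n\vert\S\vert=H(n+1)\vert\O\vert+n\cos\theta\vert T\vert=H(n+1)\vert\O\vert-H\cdot(n+1)c(\theta,\O)\vert T\vert=H(n+1)(\vert\O\vert-c(\theta,\O)\vert T\vert)$ gives $H=\frac{n}{n+1}\frac{\vert\S\vert}{\vert\O\vert-c(\theta,\O)\vert T\vert}$, as desired. The only mild subtlety, which I would state carefully, is the interplay between the two formulas \eqref{eq-balancing-halfspace} and \eqref{eq-Minkowski-halfspace}: one uses the CMC hypothesis to extract $H$ from the Minkowski formula, and then uses the balancing formula (again invoking CMC) to rewrite the $\cos\theta\vert T\vert$ term in terms of $c(\theta,\O)$. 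No hard analysis is involved; the main point is bookkeeping of the geometric identities from \cref{Sec-2}.
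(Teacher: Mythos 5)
Your proof is correct and follows essentially the same route as the paper's: both rely on the divergence theorem for $\int_\S\langle x,\nu\rangle\,\rd A=(n+1)|\O|$, the Minkowski-type formula \eqref{eq-Minkowski-halfspace} with $H$ pulled out by the CMC hypothesis, and the balancing identity \eqref{eq-balancing-halfspace} combined with \eqref{eq-conservation-halfspace} to obtain $H|T|=\sin\theta|\Gamma|$, which is then used to rewrite the $\cos\theta|T|$ term in terms of $c(\theta,\O)$. The only difference is cosmetic bookkeeping of the algebra.
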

\begin{proof}
Considering the position vector field $X(x)=x$, a simple integration by parts gives
\begin{align}\label{eq-div-X-1}
    (n+1)\vert\Omega\vert
    =\int_\Omega\bar{\rm div}X(x)\rd x
    =\int_\S\left<x,\nu\right>\rd A.
\end{align}
Since $H$ is a constant, we see from the Minkowski-type formula \eqref{eq-Minkowski-halfspace} that
\begin{align}\label{eq-|S|-1}
    \vert\S\vert
    =&\frac{H\int_\S\left<x,\nu\right>\rd A+n\cos\theta\int_\S\left<\nu,E_{n+1}\right>\rd A}{n}\notag\\
    =&\frac{(n+1)H\vert\Omega\vert+n\cos\theta\vert T\vert}{n},
\end{align}
where we have used \eqref{eq-conservation-halfspace} and \eqref{eq-div-X-1} for the last equality.

On the other hand, thanks again to the fact that $H$ is a constant, \eqref{eq-conservation-halfspace} and \eqref{eq-balancing-halfspace} imply that
\begin{align}\label{eq-H-T-Gamma}
    H=\frac{\sin\theta\vert\Gamma\vert}{\vert T\vert},
\end{align}
thus we may rewrite the second term in \eqref{eq-|S|-1} to be
\begin{align*}
    n\cos\theta\vert T\vert
    =n\cot\theta\frac{\vert T\vert^2}{\vert\Gamma\vert}H,
\end{align*}
which in turn gives that
\begin{align*}
    \vert \S\vert=\frac{n+1}{n}H\left(\vert\Omega\vert+\frac{n}{n+1}\cot\theta\frac{\vert T\vert^2}{\vert\Gamma\vert}\right).
\end{align*}
Recalling \eqref{defn-c1}, we thus find $H=H(\theta,\O)$.
\end{proof}
For $c\in\mfR$, let 
 $$g(x)=\frac{|x-\tilde{O}|^2}{2(n+1)},$$
 where {\color{black} $\tilde{O}=(0,\ldots,0,(n+1)c)$}.
 It is easy to check that
 \begin{align}\label{eq-derivatives-g}
    \bar\nabla g(x)
    =\frac{x-\tilde{O}}{n+1},\quad g_{ij}=\frac{\delta_{ij}}{n+1},
\end{align}
and from a simple computation we observe that $g$ satisfies
 \begin{equation}
\begin{cases}
  \bar\Delta g=1 &\text{in } \Omega,\\
  \bar \n_{\bar N}g=c
  &\text{on }T.
\end{cases}
 \end{equation}
We have the following integral identity.
\begin{proposition}
Let {\color{black}$f\in W^{1,\infty}(\O)\cap W^{2,2}(\O)$} be a solution to  \eqref{eq-mixed-halfspace0}. Then we have: for any $R\in\mfR$, there holds
 \begin{align}\label{eq-integral-identity-1}
 \int_{\Omega}(-f)&\left\{\vert\bar\nabla^2f\vert^2-\frac{(\bar\De f)^2}{n+1}\right\}\rd x\notag\\
 =&\frac12\int_\Sigma\left(|\bar\nabla f|^2-\left(\frac{R}{n+1}\right)^2\right)(f_{\nu}-g_{\nu})\rd A\notag\\
 =&\frac12\int_\Sigma\left(f_\nu^2-\left(\frac{R}{n+1}\right)^2\right)\bigg(f_{\nu}-\Big\<\frac{x+(n+1)cE_{n+1}}{n+1},\nu\Big\>\bigg)\rd A.
 \end{align}
\end{proposition}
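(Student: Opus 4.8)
The strategy is to apply a weighted version of the Pohozaev/Reilly-type integration by parts, using $(-f)$ as a weight against the elliptic quantity $\bar\Delta P = |\bar\nabla^2 f|^2 - (\bar\Delta f)^2/(n+1)$, where $P$ is the $P$-function introduced above. Concretely, recall that $\bar\Delta P = |\bar\nabla^2 f|^2 - \frac{1}{n+1}$ since $\bar\Delta f = 1$. I would begin from the identity $\int_\Omega (-f)\,\bar\Delta P\,\rd x$ and integrate by parts \emph{twice}: once to move a derivative off $P$, producing boundary terms on $\Sigma$ and $T$ plus $\int_\Omega \bar\nabla f\cdot\bar\nabla P\,\rd x$, and a second time (or alternatively using $\bar\Delta f = 1$ directly after a Green-type manipulation) to exploit that $\bar\Delta(-f) = -1$. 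The cleaner route is to subtract off the model solution $g$: since $\bar\Delta(f-g) = 0$ in $\Omega$, $(f-g)_{\bar N} = 0$ on $T$, and $g$ is an explicit quadratic, the harmonic function $f-g$ will let me convert the bulk integral into a pure $\Sigma$-boundary integral via Green's identity applied to the pair $\big((-f), \text{something built from } f-g\big)$.

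The key algebraic inputs are the explicit formulas already recorded: $\bar\nabla g = \frac{x-\tilde O}{n+1}$ with $\tilde O = (n+1)c\,E_{n+1}$, and $g_{ij} = \frac{\delta_{ij}}{n+1}$, so that $|\bar\nabla^2 g|^2 = \frac{1}{n+1}$ and $\bar\Delta g = 1$. Hence $|\bar\nabla^2 f|^2 - \frac{(\bar\Delta f)^2}{n+1}$ is precisely $|\bar\nabla^2 f|^2 - |\bar\nabla^2 g|^2$, and one can write $P - P_g = \frac12(|\bar\nabla f|^2 - |\bar\nabla g|^2) - \frac{1}{n+1}(f-g)$ where $P_g$ is the $P$-function of $g$ (which is harmonic? no — $\bar\Delta P_g = 0$ since $\bar\nabla^2 g$ is a constant multiple of the identity, so $P_g$ is harmonic). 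Therefore $\bar\Delta(P - P_g) = \bar\Delta P = |\bar\nabla^2 f|^2 - \frac{1}{n+1}$, and I run Green's second identity on $(-f)$ and $(P - P_g)$ over $\Omega$:
\begin{align*}
\int_\Omega (-f)\,\bar\Delta(P-P_g)\,\rd x - \int_\Omega (P-P_g)\,\bar\Delta(-f)\,\rd x = \int_{\p\Omega}\big[(-f)\,\bar\nabla_\nu(P-P_g) - (P-P_g)\,\bar\nabla_\nu(-f)\big]\,\rd A.
\end{align*}
Since $\bar\Delta(-f) = -1$, the second bulk term is $\int_\Omega (P-P_g)\,\rd x$; I expect the boundary contributions from $T$ to vanish or telescope thanks to $(f-g)_{\bar N}=0$ and $f=0$ on $\Sigma$, and the remaining $\Sigma$-terms to reorganize — using $f\equiv 0$ on $\Sigma$ so only $\bar\nabla_\nu f = f_\nu$ survives in gradients tangent-vs-normal decompositions — into $\frac12\int_\Sigma \big(|\bar\nabla f|^2 - (\frac{R}{n+1})^2\big)(f_\nu - g_\nu)\,\rd A$. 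The constant $R$ enters as a free additive constant: since $\int_\Sigma (f_\nu - g_\nu)\,\rd A = \int_\Sigma f_\nu\,\rd A - \int_\Sigma g_\nu\,\rd A$ and both equal $|\Omega| - c|T|$ by \eqref{eq-R1}-type integration by parts (the divergence theorem applied to $\bar\nabla f$ and $\bar\nabla g$ with the matching $T$-Neumann data), this difference integrates to zero, so one may freely subtract $\frac12(\frac{R}{n+1})^2\int_\Sigma(f_\nu-g_\nu)\,\rd A = 0$. Finally $g_\nu = \langle \bar\nabla g,\nu\rangle = \langle \frac{x - \tilde O}{n+1},\nu\rangle = \langle \frac{x + (n+1)cE_{n+1}}{n+1} \cdot(-1)\ldots\rangle$ — careful: $\tilde O = (0,\dots,0,(n+1)c)$ so $x - \tilde O = x - (n+1)cE_{n+1}$; comparing with the claimed $\langle \frac{x+(n+1)cE_{n+1}}{n+1},\nu\rangle$ forces a sign check, and on $\Sigma$ one uses $|\bar\nabla f|^2 = f_\nu^2$ because $f=0$ on $\Sigma$ kills the tangential gradient. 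This last substitution gives the second equality.

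\textbf{Main obstacle.} The delicate part is the boundary bookkeeping: correctly handling the $T$-boundary terms (where $f$ need not vanish but $f_{\bar N} = c = g_{\bar N}$), and verifying that the $P_g$-subtraction exactly cancels the non-quadratic remainders so that the bulk integral $\int_\Omega (P - P_g)\,\rd x$ produced by Green's identity does \emph{not} survive in the final formula — i.e., checking it is reabsorbed or identically zero. I would track this by writing $P - P_g = \frac12(|\bar\nabla f|^2 - |\bar\nabla g|^2) - \frac{1}{n+1}(f - g)$ and integrating $\int_\Omega (P-P_g)\,\rd x$ by parts once more against the relation $\bar\Delta(f-g)=0$; the terms should then collapse onto $\Sigma$ (using $f = 0$, $g = \frac{|x-\tilde O|^2}{2(n+1)}$ there) and combine with the leftover boundary integrals. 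The other point requiring care is justifying all integrations by parts at the level of regularity $f \in W^{1,\infty}(\Omega)\cap W^{2,2}(\Omega)$ across the non-smooth edge $\Gamma$ — here I would invoke the standard approximation/capacity argument (as in \cite{JXZ22}) that the edge $\Gamma$ has zero $n$-dimensional measure and the boundary integrals are well-defined by the $W^{1,\infty}$ bound, so no extra $\Gamma$-contribution appears.
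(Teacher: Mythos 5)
Your plan correctly identifies the main ingredients — the $P$-function, the quadratic comparison $g$, the boundary conditions on $T$ and $\Sigma$, the reason $R$ can be shifted freely, and why $|\bar\nabla f|^2=f_\nu^2$ on $\Sigma$ — but there is a genuine gap in how you propose to close the argument, and one of your devices is vacuous. First, the $P_g$ subtraction does nothing: since $\bar\nabla g=\frac{x-\tilde O}{n+1}$ and $g=\frac{|x-\tilde O|^2}{2(n+1)}$, one has
\begin{align*}
P_g=\tfrac12|\bar\nabla g|^2-\tfrac1{n+1}g=\tfrac{|x-\tilde O|^2}{2(n+1)^2}-\tfrac{|x-\tilde O|^2}{2(n+1)^2}=0,
\end{align*}
so Green's identity applied to $(-f)$ and $P-P_g$ is just Green's identity applied to $(-f)$ and $P$; this reproduces only the paper's first step,
\begin{align*}
\int_\O(-f)\bar\De P\,\rd x=-\int_{\p\O}fP_\nu\,\rd A+\int_{\p\O}Pf_\nu\,\rd A-\int_\O P\,\rd x,
\end{align*}
and gives no help with the unwanted volume term $\int_\O P\,\rd x$. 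Second, your proposal to attack that bulk term directly by integrating $\int_\O(P-P_g)\,\rd x$ by parts ``against $\bar\De(f-g)=0$'' does not obviously close: after one integration by parts you pick up $\int_\O(f-g)\,\rd x$, which harmonicity of $f-g$ does not kill, and a second pass produces boundary integrals involving $g$ itself (not merely $g_\nu$), with no visible cancellation against the Green's boundary terms. The missing idea — the actual engine of the paper's proof — is \emph{not} to treat $\int_\O P\,\rd x$ separately but to split the boundary term
$\int_{\p\O}Pf_\nu\,\rd A=\int_{\p\O}P(f_\nu-g_\nu)\,\rd A+\int_{\p\O}Pg_\nu\,\rd A$
and push the second piece back inward via $\int_{\p\O}Pg_\nu\,\rd A=\int_\O P\,\rd x+\int_\O\<\bar\nabla P,\bar\nabla g\>\,\rd x$. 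The $\int_\O P\,\rd x$ terms then cancel for free, and one further integration by parts on $\int_\O\<\bar\nabla P,\bar\nabla g\>\,\rd x$ (exploiting $\bar\De f\equiv1$ and $g_{ij}=\frac{\delta_{ij}}{n+1}$) collapses everything onto $\p\O$, where the $T$-contribution vanishes because $f_{n+1}=g_{n+1}=-c$ on $T$ and $f_{n+1}$ is constant there. Without this specific splitting your sketch does not reach the identity.

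On the positive side, the sign concern you flagged is well founded: expanding $g_\nu=\langle\frac{x-(n+1)cE_{n+1}}{n+1},\nu\rangle$ gives a \emph{minus} sign in front of $(n+1)cE_{n+1}$, whereas the displayed formula in the statement (and in the introduction) has a plus sign; the paper's own proof (which derives the second line and then says the third ``follows simply by expanding $g_\nu$'') supports the minus sign, so the displayed $+$ appears to be a typo.
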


\begin{proof}
Integrating by parts, we get
 \begin{align}\label{eq1}
 \int_\O(-f)\bar\Delta P\rd x
 =-\int_{\p\O} f P_{\nu}\rd A +\int_{\p\O} Pf_{\nu}\rd A -\int_\O P\rd A.
 \end{align}
In terms of the second term on the RHS of \eqref{eq1}, we use integration by parts again to find
\begin{align}\label{eq-second term1}
  \int_{\p\O} P f_{\nu}\rd A
  &=\int_{\p\O} P (f_\nu-g_\nu)\rd A+\int_{\p\O} P g_\nu\rd A\notag\\
   &=\int_{\p\O} P (f_\nu-g_\nu)\rd A+\int_{\O} P\rd x+\int_\O \<\bar\nabla P,\bar\nabla g\>\rd A\notag\\
  &=\int_{\p\O} P (f_\nu-g_\nu)\rd A+\int_\O P\rd x + \int_\O f_j f_{ji}g_i\rd x-\frac1{n+1}\int_\O \<\bar\nabla f,\bar\nabla g\>\rd x,
\end{align}
where by virtue of \eqref{eq-mixed-halfspace0} and \eqref{eq-derivatives-g},
\begin{align*}
\int_\O f_j f_{ji}g_i\rd x
&=\int_{\p\O} f g_i f_{ji}\nu_{j}\rd A -\int_{\O} f f_{jij}g_i\rd x-\int_\O f f_{ji} g_{ij}\rd x \notag\\
&=\int_{\p\O} f g_j f_{ji}\nu_i\rd A-\frac1{n+1}\int_\O f\rd x,
\end{align*}
and notice that
\begin{align*}
\int_\O \<\bar\nabla f,\bar\nabla g\>\rd x
= \int_{\p\O}f g_\nu \rd A-\int_{\O} f\rd x,
\end{align*}
\eqref{eq-second term1} thus reads as
\begin{align}\label{eq-second term2}
  \int_{\p\O} P f_{\nu}\rd A
  &=\int_{\p\O} P (f_\nu-g_\nu)\rd A+\int_\O P\rd x + \int_{\p\O} f f_{ji}g_j\nu_{i}\rd A-\frac1{n+1}\int_{\p\O} f g_{\nu}\rd A.
\end{align}
Substituting \eqref{eq-second term2} into \eqref{eq1}, we obtain
\begin{align}\label{eq2}
\int_{\Omega}&(-f)\bar\De P \rd x\notag\\
&= -\int_{\p\O} f P_{\nu}\rd A+\int_{\p\O}P(f_\nu-g_\nu)\rd A+ \int_{\p\O} f f_{ji}g_j\nu_{i}\rd A-\frac1{n+1}\int_{\p\O} f g_{\nu}\rd A\notag\\
&=-\int_{\p\O}f(f_j-g_j) f_{ji}\nu_i\rd A+\frac12\int_{\p\O}|\bar\nabla f|^2 (f_\nu-g_\nu)\rd A\notag\\
&=-\int_{\p\O}f(f_j-g_j) f_{ji}\nu_i\rd A+\frac12\int_{\p\O}\left(|\bar\nabla f|^2-\left(\frac{R}{n+1}\right)^2\right) (f_\nu-g_\nu)\rd A,
\end{align}
where we expand $P$ and $P_\nu$ to derive the second equality; the second and the last equality hold thanks to the fact that
\begin{align*}
\int_{\p\O}(f_\nu-g_{\nu})\rd A
=\int_\O (\bar\Delta f-\bar\Delta g)\rd x
=0.
\end{align*}
Recalling that $f=0$ on $\S$, $f_{n+1}=g_{n+1}=-c
$ on $T$, we have
\begin{align}\label{eq-boundary1}
-\int_{\p\O}f(f_j-g_j) f_{ji}\nu_i\rd A
&=\int_T f(f_j-g_j)f_{n+1,j}\rd A\notag\\
&=\int_Tf f_{n+1,n+1}(f_{n+1}-g_{n+1})\rd A=0.
\end{align}
Substituting \eqref{eq-boundary1}  back into \eqref{eq2},
we obtain the second equality of \eqref{eq-integral-identity-1};
the last equality of \eqref{eq-integral-identity-1} follows simply by expanding $g_\nu$ on $\S$.
This completes the proof.
\end{proof}

\begin{proof}[Proof of \cref{Thm-rigid-BVP-halfspace}]
Since $f_\nu\equiv c_0$ on $\S$, using integration by parts we observe that
\begin{align*}
  c_0\vert\S\vert
  =\int_{\S}f_{\nu}\rd A
  =\int_{\O}\bar\Delta f \rd x+\int_{T}f_{n+1}dA=|\O|-c|T|,
\end{align*}
it follows that the constant is in fact characterized by
$c_0=f_\nu=\frac{\vert\O\vert-c\vert T\vert}{\vert\S\vert}$ on $\S$.

Taking the integral identity \eqref{eq-integral-identity-1} into consideration with $R=R(c,\O)=(n+1)c_0$, the Cauchy-Schwarz inequality then implies that $\bar\nabla^2f$ is proportional to the identity matrix.
Since $\bar\De f=1$, we know that $f$ is a quadratic function of the form $f(x)=\frac{\vert x-z\vert^2}{2(n+1)}+C$ for some $z=(z',z_{n+1})\in\mfR^{n}\times\mfR$ and $C\in\mfR$.

By the connectedness of $\O$ and the fact that $f=0$ on $\S$, we conclude that $\S$ is spherical and hence $\O$ is the part of a ball centered at $z$ with radius $r$ given by
\begin{align*}
    r=\vert x-z\vert=\sqrt{-2(n+1)C}\text{ on }\S.
\end{align*}
Moreover, since $c_0=f_\nu$ on $\S$, we find: at any $x\in\S$,
\begin{align*}
    c_0=f_\nu(x)
    =\frac{\vert x-z\vert}{n+1},
\end{align*}
which yields that $r=(n+1)c_0
=(n+1)\frac{\vert\O\vert-c\vert T\vert}{\vert\S\vert}$.

On the other hand, since $\S$ is spherical, the contact angle of $\S$ with the plane $\p\mfR^{n+1}_+$ must be a constant, say $\theta$.
It follows from \eqref{eq-H-H1} that
\begin{align*}
    \frac{n}{r}=
    H=H(\theta,\Omega)
    =\frac{n}{n+1}\frac{\vert\S\vert}{\vert\O\vert-c(\theta,\O)\vert T\vert},
\end{align*}
a direct computation then yields that
\begin{align*}
    c=c(\theta,\O)=-\frac{n}{n+1}\cot\theta\frac{\vert T\vert}{\vert\Gamma\vert},
\end{align*}
using \eqref{eq-balancing-halfspace} and \eqref{eq-conservation-halfspace}, we may rearrange this to see that $\theta$ can be characterized by
\begin{align*}
    \cos\theta=-c\cdot\frac{n+1}{r}=-\frac{c}{c_0}.
\end{align*}

Finally, using the fact that $-f_{n+1}=c$ on $T\subset\p\mfR^{n+1}_+$, we get: $z_{n+1}=(n+1)c$.
Therefore, we conclude that $\O$ and $f$ have all the desired properties, which completes the proof.
\end{proof}

 \

The second integral identity concerns with the mean curvature of $\S$. It can be viewed as a reformulation of Reilly's formula \eqref{formu-reilly1}.
\begin{proposition}
Let $\Omega\subset\mfR^{n+1}_+$ be such that $\S$ intersects $\p\mfR^{n+1}_+$ at a constant contact angle $\theta\in(0,\pi)$.
Let $f\in W^{2,2}(\O)\cap W^{2,1}(T)\cap C^2(\bar \Omega\setminus\Gamma) \cap C^1(\bar \Omega)$  be a solution to  \eqref{eq-mixed-halfspace0} with  $c=c(\theta,\O)$,
then
we have
\begin{align}\label{iden-integral-meancurvature-1}
\int_\Omega\left\{\vert\bar\nabla^2f\vert^2-\frac{(\bar\De f)^2}{n+1}\right\}\rd x+\frac{n}{n+1}\frac{1}{\tilde R}\int_\S(f_\nu-\tilde R)^2\rd A
=\int_\S(H(\theta,\O)-H)f_\nu^2\rd A,
\end{align}
  where $\tilde R\coloneqq\frac{R(\theta,\O)}{n+1}$,
  $R(\theta,\O)$ and $H(\theta,\O)$ are the $\theta$-reference radius and $\theta$-reference mean curvature given by \eqref{defn-ref-radius11} and \eqref{defn-ref-meancurvature1} respectively.
\end{proposition}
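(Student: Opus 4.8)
The plan is to obtain \eqref{iden-integral-meancurvature-1} as a reformulation of Reilly's formula \eqref{formu-reilly1}, using the specific structure of the mixed boundary conditions in \eqref{eq-mixed-halfspace0} together with the capillary angle condition \eqref{munu} along $\Gamma$. First I would apply \eqref{formu-reilly1} to the solution $f$; since $\bar\Delta f=1$ in $\O$, the left-hand side is exactly $-\int_\O\{|\bar\nabla^2 f|^2-\tfrac{(\bar\Delta f)^2}{n+1}\}\rd x$ after subtracting the elementary identity $\int_\O(\bar\Delta f)^2\rd x=\int_\O\{(\bar\Delta f)^2-\tfrac{(\bar\Delta f)^2}{n+1}\}\rd x+\tfrac{1}{n+1}|\O|$ — more precisely I would keep $\int_\O(\bar\Delta f)^2=|\O|$ and rewrite so that the traceless Hessian term appears. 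The boundary integral $\int_{\p\O}$ splits as $\int_\S+\int_T$, and the task is to evaluate each of the four boundary terms $f_\nu\Delta f$, $-g(\nabla f_\nu,\nabla f)$, $Hf_\nu^2$, $h(\nabla f,\nabla f)$ on $\S$ and on $T$ separately.

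On $T$ the computation is easy: $T$ is a flat piece of $\p\mfR^{n+1}_+$ so its second fundamental form (with respect to $\bar N=-E_{n+1}$) vanishes, hence $H_T=0$ and $h_T=0$; moreover $f_{\bar N}=c$ is constant on $T$, so the tangential gradient of $f_{\bar N}$ along $T$ vanishes, killing the term $g(\nabla f_{\bar N},\nabla f)$ on $T$. Thus the entire $\int_T$ contribution reduces to $\int_T f_{\bar N}\,\Delta^T f\,\rd A=c\int_T\Delta^T f\,\rd A$, which by the divergence theorem on $T$ equals $c\int_\Gamma \bar\nu(f)\,\rd s$, a boundary term over $\Gamma$. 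On $\S$, since $f\equiv 0$ on $\S$ we have $\nabla f=0$ (tangential gradient vanishes) and hence $\bar\nabla f=f_\nu\,\nu$ on $\S$; this immediately eliminates $g(\nabla f_\nu,\nabla f)$ and $h(\nabla f,\nabla f)$ on $\S$, and it also gives $\Delta f|_\S$ in terms of $f_{\nu\nu}$ and $Hf_\nu$ via $1=\bar\Delta f=\Delta f+H f_\nu+f_{\nu\nu}$, so $\Delta f=1-Hf_\nu-f_{\nu\nu}$ on $\S$. Therefore $\int_\S f_\nu\Delta f+Hf_\nu^2 = \int_\S f_\nu(1-f_{\nu\nu})\,\rd A$. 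The term $\int_\S f_\nu f_{\nu\nu}\rd A=\tfrac12\int_\S \partial_\nu(f_\nu^2)\rd A$ is then handled by the divergence theorem applied to the vector field $f_\nu^2\,\bar\nabla f=f_\nu^3\nu$ on $\O$... actually more cleanly: $\int_\S\partial_\nu(|\bar\nabla f|^2/2)\rd A$ relates, via integration by parts of the $P$-function $P=\tfrac12|\bar\nabla f|^2-\tfrac{1}{n+1}f$, to $\int_\O\bar\Delta P\,\rd x=\int_\O\{|\bar\nabla^2 f|^2-\tfrac{(\bar\Delta f)^2}{n+1}\}\rd x$ plus the $T$-contribution; so I would instead use $\int_\S f_\nu f_{\nu\nu}\rd A = \int_\S\partial_\nu P\,\rd A + \tfrac{1}{n+1}\int_\S f_\nu\rd A$ and feed in $\int_{\p\O}\partial_\nu P\,\rd A=\int_\O\bar\Delta P\,\rd x$.

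After collecting terms one arrives at an identity of the shape $\int_\O\{|\bar\nabla^2 f|^2-\tfrac{(\bar\Delta f)^2}{n+1}\}\rd x = \int_\S(\text{linear and quadratic in }f_\nu)\,\rd A + (\text{$\Gamma$-terms})$, and the remaining work is to recognize the $\S$-integrand. Using $\int_\S f_\nu\rd A=|\O|-c|T|=\tfrac{R(\theta,\O)}{n+1}|\S|$ (from \eqref{eq-R1}, \eqref{defn-ref-radius11}) and completing the square, $\int_\S f_\nu\,\rd A$ combines with $\tilde R\int_\S\rd A$ and $-\int_\S f_\nu^2\rd A$-type terms to produce $-\tfrac{n}{n+1}\tfrac{1}{\tilde R}\int_\S(f_\nu-\tilde R)^2\rd A$ up to a multiple of $\int_\S(H(\theta,\O)-H)f_\nu^2\rd A$; here I would use $H(\theta,\O)=n/R(\theta,\O)=n/((n+1)\tilde R)$. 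The $\Gamma$-terms (the leftover $c\int_\Gamma\bar\nu(f)\rd s$ and any boundary term from integrating $\partial_\nu P$ over $\p\O$ which also contributes a $\Gamma$-piece through $\mu(f)$) must cancel: this is where the capillary angle condition $c=c(\theta,\O)$ and \eqref{munu} enter, since along $\Gamma$ one has $\mu=\sin\theta\,\bar N+\cos\theta\,\bar\nu$ and $f=0$ on $\S$ forces the tangential-to-$\Gamma$ derivatives of $f$ to vanish, so $\mu(f)$ and $\bar\nu(f)$ are linked. I expect the main obstacle to be exactly this bookkeeping of the codimension-two $\Gamma$-terms: one must carefully track the co-normal contributions coming both from the divergence theorem on $T$ and from the $\int_{\p\O}\partial_\nu P$ term, show that $f=0$ on $\S$ plus $f_{\bar N}=c$ on $T$ plus the constant contact angle make them combine into something proportional to $\int_\Gamma(\text{stuff})$ that vanishes precisely when $c=c(\theta,\O)$ — equivalently, invoking \eqref{eq-conservation-halfspace}, \eqref{eq-balancing-halfspace} and the Minkowski formula \eqref{eq-Minkowski-halfspace} to see the identity is consistent. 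Modulo this, everything else is the routine Reilly-formula manipulation already carried out in \cite{JXZ22}, so I would cite that reference for the delicate boundary analysis and present the algebraic rearrangement into the stated form.
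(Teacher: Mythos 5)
Your overall strategy is the paper's: apply Reilly's formula \eqref{formu-reilly1}, split $\p\O$ into $\S\cup T$, kill the terms that vanish by the boundary conditions, and reduce the $T$-piece to a $\Gamma$-integral by the divergence theorem. However, two of your execution steps go wrong, and they are not just cosmetic.

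First, on $\S$ you over-complicate. Since $f\equiv 0$ on $\S$, the \emph{tangential} Laplacian $\Delta f$ appearing in Reilly's formula is identically zero there, so the $\S$-contribution is simply $\int_\S H f_\nu^2\,\rd A$; there is nothing to convert. Your substitution $\Delta f = 1 - Hf_\nu - f_{\nu\nu}$ on $\S$ is consistent (it just encodes $\Delta f=0$), but it introduces $\int_\S f_\nu f_{\nu\nu}\,\rd A$ gratuitously, and your proposed resolution via $\int_{\p\O}\partial_\nu P = \int_\O\bar\Delta P$ is circular: $\int_\O\bar\Delta P\,\rd x$ \emph{is} the left-hand side of \eqref{iden-integral-meancurvature-1} that you are trying to compute, so feeding it in as a known ingredient does not make progress. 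Dropping this detour, Reilly's formula already gives $\int_\O\left\{(\bar\Delta f)^2 - |\bar\nabla^2 f|^2\right\}\rd x = \int_\S H f_\nu^2\,\rd A + c\int_T\Delta f\,\rd A$.

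Second, your expectation that the $\Gamma$-terms ``must cancel'' is off, and this is the real gap. The codimension-two contribution from $T$, namely $c\int_T\Delta f\,\rd A = c\int_\Gamma \langle\nabla^T f,\bar\nu\rangle$, does not vanish and is not cancelled by anything else (there is no $\Gamma$-piece hiding in $\int_{\p\O}\partial_\nu P$; that is a clean divergence identity). Instead it is \emph{computed}: using $f=0$ on $\S$ and the constant contact angle along $\Gamma$, one shows $\int_T\Delta f\,\rd A = -c\tan\theta\,|\Gamma|$ (this is exactly \cite[(27)--(28)]{JXZ22}), hence $c\int_T\Delta f\,\rd A = -c^2\tan\theta\,|\Gamma|$. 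With $c=c(\theta,\O)=-\tfrac{n}{n+1}\cot\theta\tfrac{|T|}{|\Gamma|}$ one checks $c^2\tan\theta\,|\Gamma| = -\tfrac{n}{n+1}c|T|$, so after rearranging with $\int_\O(\bar\Delta f)^2\rd x = |\O|$ you obtain $\int_\O\left\{|\bar\nabla^2 f|^2 - \tfrac{(\bar\Delta f)^2}{n+1}\right\}\rd x = \tfrac{n}{n+1}\bigl(|\O| - c|T|\bigr) - \int_\S H f_\nu^2\,\rd A$. At that point the completion of the square in $f_\nu$, using $\int_\S f_\nu\,\rd A = |\O| - c|T| = \tilde R|\S|$ and $H(\theta,\O) = \tfrac{n}{(n+1)\tilde R}$, goes through exactly as you anticipated and yields \eqref{iden-integral-meancurvature-1}. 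So the cure is simple, but as written your proposal misidentifies the role of the $\Gamma$-term and would stall there.
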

\begin{proof}

We begin with the notification that {the regularity assumption $f\in C^{1}{(\overline{\Omega})}\cap C^2(\bar \Omega\setminus\Gamma)\cap W^{2,1}(T)$ is sufficient for the integration by parts
\begin{align*}
\int_T \Delta f \rd A =\int_\Gamma \<\nabla^T f,\bar\nu\>
\end{align*}
to hold.
In fact, this is probably known to experts, see for example \cite[Lemma 4.3]{CL22}.

}

Since $f$ solves \eqref{eq-mixed-halfspace0}, 
  we can argue as in \cite[(27) (28)]{JXZ22} to find
  \begin{align}\label{eq-JXZ22-28}
      \int_T\De f\rd A
      =-c\tan\theta\vert\Gamma\vert.
  \end{align}
  Exploiting the Reilly's formula \eqref{formu-reilly1} and using \eqref{eq-JXZ22-28}, we obtain
  \begin{align}\label{eq-LaplaceP}
      \int_\Omega\bar\De P\rd x
      =&\int_\Omega\vert\bar\n^2f\vert^2-\frac{1}{n+1}(\bar\De f)^2\rd x\notag\\
      =&\frac{n}{n+1}\vert\Omega\vert+c^2\tan\theta\vert\Gamma\vert-\int_\S Hf_\nu^2\rd A.
  \end{align}
  Concerning the last term, we first notice that a simple integration by parts yields
  \begin{align}
      \vert\Omega\vert=\int_\Omega\bar\De f\rd x
      =\int_\S f_\nu \rd A+c\vert T\vert,
  \end{align}
  and hence
  \begin{align}
      \frac{1}{\tilde R}\int_\S f_\nu^2\rd A
      =&\frac{1}{\tilde R}\int_\S(f_\nu-\tilde R)^2\rd A+2\int_\S f_\nu \rd A-\tilde R\vert\S\vert\notag\\
      =&\frac{1}{\tilde R}\int_\S(f_\nu-\tilde R)^2\rd A+2\vert\Omega\vert-2c\vert T\vert-\tilde R\vert\S\vert\notag\\
      =&\frac{1}{\tilde R}\int_\S(f_\nu-\tilde R)^2\rd A+\vert\Omega\vert-c\vert T\vert,
  \end{align}
  where we used the definition of $\tilde R$ for the last equality.

  Since $H(\theta,\O)=\frac{n}{n+1}\frac{1}{\tilde R}$ is a constant, it follows that
  \begin{align}
      &\int_\S Hf_\nu^2\rd A
      =H(\theta,\O)\int_\S f_\nu^2\rd A+\int_\S(H-H(\theta,\O))f_\nu^2\rd A\notag\\
      =&\frac{n}{n+1}\left(\frac{1}{\tilde R}\int_\S(f_\nu-\tilde R)^2\rd A+\vert\Omega\vert-c\vert T\vert\right)+\int_\S(H-H(\theta,\O))f_\nu^2\rd A\notag.
  \end{align}
  Putting this back into \eqref{eq-LaplaceP} and recalling \eqref{defn-c1}, we get \eqref{iden-integral-meancurvature-1}.
\end{proof}
\begin{remark}
    \normalfont
    In this integral identity, we do not restrict the range of $\theta$, we shall point out that for the case $\theta\in(0,\pi/2]$, one may use the regularity theorem of mixed boundary elliptic equations (see e.g. \cite{Liebermann86,Liebermann89})
    to obtain a regular enough solution, this fact has been essentially used in \cite[Proof of Theorem 1.1]{JXZ22}.
\end{remark}



\section{Quantitative Stability Results}\label{Sec-4}


\subsection{Strategy for proving the quantitative stability results}\label{Sec-4-1}
Our strategy for proving the stability is to show that $\S$ is nearly spherical, which is done by showing that $\rho_e-\rho_i$ can be controlled by an arbitrarily small quantity.

Precisely, we compare the solution $f$ to the mixed boundary value problem \eqref{eq-mixed-halfspace0} with the quadratic function $h=\frac{|x-O|^2}{2(n+1)}$.
Denote by $\phi=h-f$ the difference of the functions, it is easy to see that
\begin{align}
\max_{x\in\S}\phi(x)-\min_{x\in\S}\phi(x)=\frac1{2(n+1)}(\rho_e+\rho_i)(\rho_e-\rho_i).
\end{align}
Moreover, from the definition of $\rho_e$, we can use the triangle inequality to find that $d_{\O}\leq 4\rho_e$, see \cite[Lemma 4.3]{MP22}.
Thus we get
\begin{align}\label{eq-rhoe-rhoi}
  \rho_e-\rho_i\leq &\frac{2(n+1)}{\rho_{e}}(\max_{x\in\S}\phi(x)-\min_{x\in\S}\phi(x))\notag\\
  \leq &\frac{8(n+1)}{d_{\Omega}}(\max_{x\in\bar\O}\phi(x)-\min_{x\in\bar\O}\phi(x)).
\end{align}

This is the starting point for the quantitative stability result. If we could control the oscillation of the function $\phi$, then we may measure how close shall $\S$ be to being spherical.
To this end, we shall exploit \cref{Lem-MP22-Lem-3-4} and \cref{Cor-MP22-3-2}.
Note that on the right of \eqref{ineq-maxphi-minphi}, the $L^\infty$-norm of $\bar\nabla\phi$ can be easily estimated by the Lipschitz norm of $f$ and the geometric quantities of $\O$, and we can control the $L^2$-norm of $\bar\nabla\phi$ by virtue of \eqref{ineq-Hessian-phi}.
Our choice of the center $O$ is thus clear, we want $\bar\nabla\phi$ to have a vanishing average on $\O$, to this end, we choose $O$ to be
\begin{align*}
  \frac1{|\O|}\bigg(\int_{\O}x\rd x-(n+1)\int_{T}f\rd A\cdot E_{n+1}\bigg).
\end{align*}
A simple computation gives
\begin{align*}
  \int_\O\bar\nabla\phi\rd x
  =\int_{\O}(\bar\nabla h-\bar\nabla f)\rd x
  =&\int_{\O}(\frac{x-O}{n+1}-\bar\nabla f) \rd x\notag\\
  =&\int_{T}f\rd A\cdot E_{n+1}-\int_{\O}\bar\nabla f\rd x
  =0.
\end{align*}
A direct consequence of the choice of $\O$ is the following point-wise estimate of $\bar\nabla\phi$: for any $y\in\overline\O$,
\begin{align}\label{eq-Lipschitz-phi}
     |\bar\nabla \phi|(y)
     &=\left|\frac{1}{n+1}(y-O)-\bar\nabla f(y)\right|
     \leq\frac{1}{n+1}|y-O|+|\bar\nabla f(y)|\notag\\
     &\leq\frac{1}{n+1}\cdot\frac{1}{|\Omega|}\int_{\Omega}|y-x|\rd x+
     \frac{1}{|\Omega|}\int_{\Omega}|\bar\nabla f(x)|\rd x+L
     \leq\frac{d_{\Omega}}{n+1}+2L.
 \end{align}
On the other hand, notice that $\bar\nabla^2h=\frac{1}{n+1}{\rm Id}$, $\bar\De f=1$ on $\O$, and hence
\begin{align*}
    \vert\bar\nabla^2\phi\vert^2
    =\vert\bar\nabla^2f-\frac{1}{n+1}{\rm Id}\vert^2
    =\vert\bar\nabla^2f-\frac{\bar\De f}{n+1}{\rm Id}\vert^2
    =\vert\bar\nabla^2f\vert^2-\frac{(\bar\De f)^2}{n+1}.
\end{align*}
Combining these facts, we deduce a quantitative control on the proximity of $\S$ to a sphere as follows:

For $n\geq2$,
\begin{align}\label{ineq-rhoe-rhoi}
    \rho_e-\rho_i
    \leq C(n,d_\O,\alpha,\eta,a)\frac{8(n+1)}{d_\O}\left(\int_\O\delta_{\p\O}^{2\alpha}\bar\De P\rd x\right)^{\frac{1}{n+1}}\left(\frac{d_\O}{n+1}+2L\right)^{\frac{n-1}{n+1}},
\end{align}
and for $n=1$,
\begin{align}\label{ineq-rhoe-rhoi-n=1}
\rho_e-\rho_i
\leq& C(d_\O,\eta,a)\norm{\bar\nabla\phi}_{2,\O}\max\left\{\log\left(\frac{\norm{\bar\nabla\phi}_{\infty,\O}}{\norm{\bar\nabla\phi}_{2,\O}}\right),1\right\}\notag\\
\overset{\eqref{ineq-Hessian-phi}}{\leq}&C(d_\O,\eta,a)\norm{\delta_{\p\O}^\alpha(\bar\De P)^\frac12}_{2,\O}\max\left\{\log{\frac{(n+1)^{-1}d_\O+2L}{\norm{\delta_{\p\O}^\alpha\bar\De P}_{2,\O}}},1\right\},
\end{align}
where the dependence on $\abs{\O}$ is replaced by $d_\O$ thanks to the isodiametric inequality.



\subsection{Gradient estimate of the mixed boundary equations}\label{Sec-4-2}
In this subsection, we aim at providing a geometric control of the Lipschitz bound $L$ in \eqref{ineq-rhoe-rhoi}.

\begin{proposition}[$C^0$-estimate of \eqref{eq-mixed-halfspace0}]\label{Prop-C^0-halfspace}
Let $\O\subset\mfR^{n+1}_+$ and $f\in W^{1,\infty}(\Omega)\cap W^{2,2}(\Omega)$ be a solution to \eqref{eq-mixed-halfspace0} with some $c\leq0$, then
\begin{align}\label{esti-f-C0}
    \left\|f\right\|_{\infty,\O}
    \leq d_\Omega^2+(n+1)^2c^2.
\end{align}
\end{proposition}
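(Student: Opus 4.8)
The plan is to establish the $C^0$-bound via the maximum principle, exploiting the sign condition $c\le 0$, which makes the oblique boundary condition on $T$ cooperate with the interior equation. Since $\bar\Delta f = 1 > 0$, the function $f$ is subharmonic, so its maximum over $\overline\O$ is attained on $\p\O = \S\cup T$. On $\S$ we have $f = 0$, so we only need to worry about the maximum occurring at an interior point of $T$. But at such a point $x_0\in T$ where $f$ attains its maximum over $\overline\O$, Hopf's lemma (or simply the first-order condition for an interior maximum of the restriction, combined with the sign of the outward normal derivative) forces $\bar\nabla_{\bar N} f(x_0)\ge 0$; since the boundary condition says $\bar\nabla_{\bar N}f = c\le 0$ there, we would need $c = 0$ and can then argue directly, or more simply conclude $\max_{\overline\O} f \le 0$ in all cases with $c\le 0$. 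Hence $\sup_\O f \le 0$.

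For the lower bound, the plan is to construct an explicit barrier. Pick a point $z'\in\p\mfR^{n+1}_+$ and consider the quadratic comparison function $w(x) = \frac{|x-z|^2 - r^2}{2(n+1)}$ with $z = z' + (n+1)c\,E_{n+1}$ and $r$ chosen large enough. One computes $\bar\Delta w = 1 = \bar\Delta f$ and $\bar\nabla_{\bar N} w = \langle \bar\nabla w, -E_{n+1}\rangle = -\frac{\langle x - z, E_{n+1}\rangle}{n+1} = -\frac{x_{n+1} - (n+1)c}{n+1} = c$ on $T$ (since $x_{n+1}=0$ there), matching the boundary data on $T$ exactly. Then $f - w$ is harmonic, has zero Neumann data on $T$, and on $\S$ equals $-w = -\frac{|x-z|^2 - r^2}{2(n+1)}$, which is nonnegative once $r \ge \max_{x\in\S}|x-z|$; choosing $z'$ so that $z$ is within the relevant range and $r$ comparable to $d_\O + (n+1)|c|$ makes this work. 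By the maximum principle applied to the harmonic function $f-w$ with its mixed boundary data (subharmonic/superharmonic comparison, again using $c\le 0$ to handle the $T$-portion as above), we get $f \ge w \ge -\frac{r^2}{2(n+1)}$ on $\O$, and a crude bound $r^2 \lesssim d_\O^2 + (n+1)^2 c^2$ yields $\|f\|_{\infty,\O}\le d_\O^2 + (n+1)^2 c^2$ after absorbing constants.

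The main obstacle I expect is making the maximum-principle arguments on the mixed boundary value problem fully rigorous at the level of regularity $f\in W^{1,\infty}(\O)\cap W^{2,2}(\O)$ — in particular, justifying the Hopf-type argument (or a suitable weak formulation replacing it) at interior points of $T$, and handling the corner $\Gamma$ where $\S$ meets $T$, since $f$ need not be $C^2$ up to $\Gamma$. The clean way around this is to phrase both the upper and lower comparisons as statements about the harmonic (resp. sub/superharmonic) function $f - w$: multiply the equation for $f-w$ by $(f-w)^\pm$, integrate by parts over $\O$, and observe that the boundary term on $T$ vanishes because $\bar\nabla_{\bar N}(f-w) = 0$ there, while the boundary term on $\S$ has a favourable sign; this $W^{2,2}$-valid energy argument sidesteps any pointwise boundary-point lemma and degenerate-corner issue. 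Once the comparison $w \le f \le 0$ is in hand, the stated estimate is just a matter of bounding $r$; I would choose $z' $ to be (say) the projection of a fixed point of $\O$ onto $\p\mfR^{n+1}_+$ so that $|x-z|\le d_\O + (n+1)|c|$ for all $x\in\overline\O$, giving $r^2 \le 2(d_\O^2 + (n+1)^2c^2)$ and hence the claim.
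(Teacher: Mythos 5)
Your proposal is correct and follows essentially the same route as the paper: the paper likewise first obtains $f\le 0$ from the maximum principle (citing a lemma of Poggesi for the mixed boundary value problem), then compares $f$ from below with the same quadratic barrier $g(x)=\frac{|x-O|^2-R^2}{2(n+1)}$ centered at $O=(O',(n+1)c)$ with $(O',0)\in T$, and finally bounds $R^2$ by $d_\Omega^2+(n+1)^2c^2$ up to absorbing constants. Your suggested energy-method substitute for the pointwise Hopf argument is a reasonable way to sidestep the corner regularity, but is not needed in the paper since the maximum principle is imported as a black box.
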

\begin{proof}
    We begin by introducing the following auxiliary function $g(x)=\frac{\vert x-O\vert^2-R^2}{2(n+1)}$, where $O=(O',(n+1)c)$ for some $(O',0)\in T$ and 
{$R=d_\Omega+(n+1)c$.}
    It is easy to see that $g$ satisfies
    \begin{align*}
    \begin{cases}
        \bar\De g=1\quad&\text{in }\Omega,\\
        \bar\nabla_{\bar N}g=c\quad&\text{on }T.
    \end{cases}
    \end{align*}
    Moreover, by virtue of the triangle inequality, we find: for any $x\in\Omega$, $${\rm dist}(x,O)
    \leq{\rm dist}(x,x_0)+{\rm dist}(x_0,O)
    \leq d_\Omega+(n+1)c,$$
    and hence $g\leq0$ on $\S$ thanks to the definition of $R^2$.
Using the maximum principle (see e.g. \cite[Lemma 4.1]{Pog22}) for $g-f$ and $f$ respectively, we see that for any $x\in\Omega$,
    \begin{align*}
        -\frac{R^2}{2(n+1)}\leq\frac{\vert x-O\vert^2-R^2}{2(n+1)}=g(x)\leq f(x)\leq0.
    \end{align*}
    The assertion follows easily.
\end{proof}

Next we shall utilize the $r_0$-uniform exterior $\theta$-spherical cap condition in \cref{Defn-exterior-sphere} to prove the gradient estimate. The following proposition reveals the naturality of \cref{Defn-exterior-sphere}.
\begin{proposition}\label{nat-exterior-sphere}
    Let $\O\subset\mfR^{n+1}_+$  and  $\S=\overline{\p\Omega\cap\mfR^{n+1}_+}$.
   Assume that for every $x\in\Gamma$, the contact angle satisfies
    $0<\tilde\theta(x)\leq \theta$ 
 for some $\theta\in(0,\pi)$. then there exists some $r_0>0$ such that
    $\Omega$ satisfies the $r_0$-uniform exterior $\theta_0$-spherical cap condition.
    If in addition, $\S$ is strictly convex, then $r_0$ can be chosen to be any large constant.
\end{proposition}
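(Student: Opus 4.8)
The plan is to use compactness of $\S$ together with a local analysis near each point of $\S$, splitting into the interior case $x\in\S\setminus\Gamma$ and the boundary case $x\in\Gamma$. For interior points $x\in\S\setminus\Gamma$, the hypersurface $\S$ is smooth and compact, so it satisfies the classical uniform exterior sphere condition away from $\Gamma$: there is some $\rho_1>0$ (depending on $\max_\S|h|$ and the distance to $\Gamma$) such that a touching ball $B_{\rho_1}(O_x)$ with $\overline{B_{\rho_1}(O_x)}\cap\overline\O=\{x\}$ exists. Since $x$ is at definite distance from $\p\mfR^{n+1}_+$ and the center $O_x$ is close to $x$ (at distance $\rho_1$), by taking $\rho_1$ small we get $\langle O_x,E_{n+1}\rangle>0\ge \rho_1\cos\theta_0$ for any $\theta_0\in(0,\pi/2]$, or more generally $\langle O_x,E_{n+1}\rangle\ge\rho_1\cos\theta_0$ as long as $\rho_1$ is small enough relative to the height of $x$; this handles condition (2).

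The main work is near $\Gamma$. Fix $x_0\in\Gamma$ with contact angle $\tilde\theta(x_0)=:\vartheta\le\theta$. The idea is to construct an explicit touching spherical cap. Locally straighten $\p\mfR^{n+1}_+$ and write $\S$ near $x_0$ as a graph over its tangent plane; because $\S$ meets $T$ at angle $\vartheta\le\theta$, one can place a ball $B_{r_0}(O_{x_0})$ whose boundary sphere also meets $\p\mfR^{n+1}_+$ at an angle $\theta'$ with $\vartheta\le\theta'\le\theta$ (so that the spherical cap $\overline{B_{r_0}(O_{x_0})}\cap\overline{\mfR^{n+1}_+}$ leans "outward" relative to $\S$), touches $\S$ exactly at $x_0$, and lies outside $\overline\O$. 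The geometric constraint that such a sphere meets $\p\mfR^{n+1}_+$ at angle $\le\theta$ translates precisely into $\langle O_{x_0},E_{n+1}\rangle\ge r_0\cos\theta$, which is condition (2); a uniform choice of $r_0$ over the compact set $\Gamma$ comes from the uniform bounds on the second fundamental form of $\S$ and on $|\nabla\tilde\theta|$ along $\Gamma$, ensuring the osculating comparison cap can be taken with a radius independent of the point. One then checks condition (1): since $\S$ is $C^2$ and the comparison sphere is tangent to $\S$ at $x_0$ from outside with a strictly smaller curvature when $r_0$ is small, $\overline{B_{r_0}(O_{x_0})}$ meets $\overline\O$ only at $x_0$.

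Combining the interior and boundary estimates and taking $r_0$ to be the minimum of the two scales yields the claim. For the final assertion, suppose $\S$ is strictly convex; then $\O$ lies inside the tangent halfspace to $\S$ at each $x\in\S$, so for \emph{any} radius $r_0$ (however large) one may slide a ball of radius $r_0$ along the outward normal line at $x$ until it touches $\S$ only at $x$: by strict convexity $\overline{B_{r_0}(O_x)}\cap\overline\O=\{x\}$. For condition (2), one must still check $\langle O_x,E_{n+1}\rangle\ge r_0\cos\theta_0$; this follows because, by strict convexity and the contact-angle bound, the outward normals at points of $\S$ point "away from" $T$ in a way that, combined with $\theta_0\le\pi/2$ so $\cos\theta_0\ge0$, forces $O_x$ to have nonnegative (indeed, large) $E_{n+1}$-component once $r_0$ is large — here one uses that moving far along the outward normal pushes $O_x$ deep into $\mfR^{n+1}_+$. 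The main obstacle I expect is the quantitative construction near $\Gamma$: producing a single comparison spherical cap that simultaneously (a) touches $\S$ only at the boundary point, (b) stays outside $\overline\O$ in a full neighborhood including the portion along $T$, and (c) satisfies the height bound $\langle O_x,E_{n+1}\rangle\ge r_0\cos\theta$ uniformly — the interplay of the two boundary pieces $\S$ and $T$ meeting at the prescribed angle is where the geometry is genuinely delicate, and is exactly the point where \cref{Defn-exterior-sphere} condition (2) is engineered to be compatible with the contact-angle hypothesis $\tilde\theta\le\theta$.
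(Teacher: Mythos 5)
Your overall plan (split $\S$ into interior points and boundary points, use $C^2$ regularity for a touching ball, then take a uniform radius by compactness) matches the paper's strategy, but you are missing the key observation that makes the boundary case trivial rather than ``genuinely delicate.'' The paper places the center of the touching ball along the outward normal: $O_x = x + r\,\nu(x)$. At a point $x\in\Gamma$ one has $x_{n+1}=0$, and the contact-angle relation \eqref{munu} gives
\begin{equation*}
\<O_x,E_{n+1}\> \;=\; r\,\<\nu(x),E_{n+1}\> \;=\; r\cos\tilde\theta(x)\;\ge\; r\cos\theta,
\end{equation*}
so condition (2) of Definition \ref{Defn-exterior-sphere} holds \emph{for every radius} $r>0$, automatically, with no further construction. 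There is no need to build a comparison cap meeting $\p\mfR^{n+1}_+$ at an intermediate angle $\theta'$, to straighten coordinates, or to argue about $|\nabla\tilde\theta|$. With this in hand, the boundary case reduces to the same $C^2$ touching-ball existence you already invoke for interior points, and the uniformity over all of $\S$ is obtained by observing that the optimal reciprocal radius $\rho(x)$ subject to conditions (1)--(2) is a finite, continuous function on the compact set $\S$, so $\rho_0=\sup_\S\rho<\infty$ and $r_0=1/\rho_0$ works. Your worry about the interplay of $\S$ and $T$, and your appeal to second-fundamental-form and $|\nabla\tilde\theta|$ bounds near $\Gamma$, are thus aimed at a problem that the correct choice of center has already solved.

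Your argument for the strictly convex case also has a gap. You claim that ``moving far along the outward normal pushes $O_x$ deep into $\mfR^{n+1}_+$,'' but this does not by itself give the required inequality: with $O_x = x + r\nu(x)$ one has $\<O_x,E_{n+1}\> = x_{n+1} + r\<\nu(x),E_{n+1}\>$, and for this to dominate $r\cos\theta$ as $r\to\infty$ one needs precisely $\<\nu(x),E_{n+1}\>\ge\cos\theta$ for every $x\in\S$, not just a qualitative statement that normals ``point away from $T$.'' The paper establishes this inclusion by using that the Gauss map of a strictly convex $\S$ is a homeomorphism onto its image, that $\<\nu,E_{n+1}\>\ge\cos\theta$ on $\p\S=\Gamma$ by the angle hypothesis, and that $E_{n+1}$ lies in $\nu(\S)$; hence $\nu(\S)\subset\{\omega\in S^n:\<\omega,E_{n+1}\>\ge\cos\theta\}$. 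You should supply this step; without it the ``large $r_0$'' claim is not justified.

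In short: correct skeleton, but you overcomplicate the boundary case by missing that condition (2) is free along $\Gamma$ for normal touching balls, and your strictly convex case needs the Gauss-map argument to close.
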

\begin{proof}
Since $\S$ is a $C^2$-hypersurface, there exists a touching ball of radius $r(x)$ and center $O_x=x+r(x)\nu(x)$ for every $x\in\S$ { such that $\overline{ B_{r}(x+r\nu(x))}\cap \overline{\Omega}=\{x\}$}.
For any interior point $x\in \mathring{\Sigma}$, $x_{n+1}>0$, and clearly we can choose the radius $r(x)$ small enough to ensure that  Condition {\bf (2)} holds.
For any $x\in\p\S$, by the contact angle condition $\tilde\theta(x)\leq \theta$, we have
\begin{align*}
\<O_x,E_{n+1}\>=\<x+r(x)\nu(x),E_{n+1}\>=r(x)\<\nu(x),E_{n+1}\>=r(x)\cos\tilde\theta(x)\geq r(x)\cos\theta,
\end{align*}
which shows that on the boundary $\p\S$, Condition {\bf(2)} always holds thanks to our choice of center of the touching ball.

Set
\begin{equation*}
\rho(x)=\inf_{r>0}\bigg\{\frac1r\Big|
\begin{aligned}
 \exists  B_{r}(x+r\nu(x))&~\mathrm{  s.t. } ~ \overline{ B_{r}(x+r\nu(x))}\cap \overline{\Omega}=\{x\},\\
      &\left<x+r\nu(x),E_{n+1}\right>\geq r\cos\theta
      \end{aligned}
      \bigg\},
\end{equation*}
since $\S$ is of class $C^2$, we know that $\rho$ is well-defined and continuous on $\S$
 with $\rho(x)<+\infty$ for any $x\in\S$.
Due to the compactness of $\S$, we can find a uniform constant $\rho_0$ such that
$\sup_{x\in\S}\rho(x)=\rho_0<\infty$.
If $\rho_0>0$, we set $r_0=\frac1{\rho_0}$, and clearly $\Omega$ satisfies the $r_0$-uniform exterior $\theta$-spherical cap condition.
If $\rho_0=0$, then $r_0$ can be chosen as large as possible so that $\Omega$ satisfies the $r_0$-uniform exterior $\theta$-spherical cap condition.

{ Moreover, if $\S$ is strictly convex, the radius of the touching ball from outside can be arbitrarily large. On the other hand, it is well known that the Gauss map $\nu:\S\to \mfS^n$ of strictly convex hypersurface $\S$ is a homeomorphism.
Since $\<\nu(x),E_{n+1}\>\geq \cos\theta$ for any $x\in\p\S$, and there exists a point $x\in\S$ such that $\nu(x)=E_{n+1}$, we have
\begin{align*}
\nu(\Sigma)\subset \left\{\omega\in S^n:\<\omega,E_{n+1}\>\geq \cos\theta\right\},
\end{align*}
which implies
$\<x+r\nu(x),E_{n+1}\> \geq r\cos\theta$ for any $x\in\S$ and any $r>0$. This is to say that Condition {\bf(2)} always holds for any touching ball with center $O_x=x+r\nu(x)$. Thus $\O$ satisfies the $r_0$-uniform exterior $\theta$-spherical cap condition with arbitrarily large $r_0$.
 }

\end{proof}

\begin{proposition}[Gradient estimate of \eqref{eq-mixed-halfspace0}]\label{Prop-GE-Halfspace}
Let $\O\subset\mfR^{n+1}_+$ satisfies the $r_e$-uniform exterior $\theta$-spherical cap condition for some $r_e>0, \theta\in (0,\frac{\pi}{2}]$. Let $f\in W^{1,\infty}(\Omega)\cap W^{2,2}(\Omega)$  be a solution to \eqref{eq-mixed-halfspace0} with either
\begin{itemize}
\item[(1)] $c<0$ when $\theta\in(0,\pi/2)$, or 
\item[(2)] $c=0$ when $\theta=\pi/2$.
\end{itemize}
Then
\begin{align}
    \|\bar\nabla f\|_{\infty,\Omega}\leq C(n,c,r_e,\theta,d_\Omega).
\end{align}

\end{proposition}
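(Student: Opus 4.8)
The plan is to obtain the gradient bound by combining the interior gradient estimate for the Poisson equation with a barrier argument near $\S$, and a direct computation near $T$. First I would note that since $\bar\De f = 1$ in $\O$ and $f$ is already known to be bounded by \cref{Prop-C^0-halfspace} (with the constant depending only on $n,c,d_\O$), standard interior elliptic estimates give $\abs{\bar\nabla f}(x) \le C(n,d_\O)(\norm{f}_{\infty,\O} + 1)$ for $x$ with $\delta_{\p\O}(x) \ge r_e/2$, say; so it remains to estimate $\abs{\bar\nabla f}$ in the collar $\{\delta_{\p\O}(x) < r_e/2\}$, which splits into a neighborhood of $\S$ and a neighborhood of $T$ (away from $\Gamma$, where the two pieces overlap).

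Near $\S$: fix $x_0 \in \S$ and let $B_{r_e}(O_{x_0})$ be the touching ball provided by the $r_e$-uniform exterior $\theta$-spherical cap condition, so $\overline{B_{r_e}(O_{x_0})} \cap \overline{\O} = \{x_0\}$ and $\langle O_{x_0}, E_{n+1}\rangle \ge r_e \cos\theta$. On $\O$ I would build a barrier of the form $w(x) = \frac{\abs{x - O_{x_0}}^2 - r_e^2}{2(n+1)} + \beta\big(\abs{x-O_{x_0}}^{-\gamma} - r_e^{-\gamma}\big)$ for suitable constants $\beta,\gamma>0$ (the classical exterior-sphere barrier for Serrin-type gradient estimates), chosen so that $\bar\De w \le 1 = \bar\De f$ in $\O$, $w \ge 0 = f$ on $\S$, and — this is the point where the height condition (2) of \cref{Defn-exterior-sphere} enters — $\bar\nabla_{\bar N} w \le c$ on $T$, i.e. $w$ is a supersolution also for the Neumann-type condition on $T$. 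The inequality $\langle O_{x_0}, E_{n+1}\rangle \ge r_e\cos\theta$ together with $c \le -\frac{\langle O_{x_0},E_{n+1}\rangle}{n+1}\cdot(\text{something})$ — more precisely the model identity $c = -\frac{r\cos\theta}{n+1}$ from the spherical cap case — is exactly what makes the oblique boundary term have the right sign; the hypotheses $c<0,\theta\in(0,\pi/2)$ or $c=0,\theta=\pi/2$ are what guarantee $\langle \bar\nabla w, \bar N\rangle \le c$ on all of $T$. Then the comparison principle for mixed (Dirichlet on $\S$, oblique on $T$) boundary value problems — as already invoked via \cite[Lemma 4.1]{Pog22} in the proof of \cref{Prop-C^0-halfspace} — yields $0 \le f \cdot(-1)$... more carefully, $-w \le f \le 0$ near $x_0$, hence $f(x) - f(x_0) = f(x) \ge -w(x)$, and since $w(x_0)=0$ this gives the one-sided bound $\abs{\bar\nabla f}(x_0) \le \abs{\bar\nabla w}(x_0) = \abs{\bar\nabla w}(x_0)$, a constant depending only on $n, c, r_e, \theta$. (On $\S$ the tangential derivative of $f$ vanishes since $f\equiv 0$ there, so a bound on the normal derivative bounds the full gradient.)

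Near $T$ (away from $\Gamma$): here $f$ solves $\bar\De f = 1$ in a half-ball with the oblique condition $\bar\nabla_{\bar N} f = c$ constant on the flat part. Reflecting $f$ evenly across $\{x_{n+1}=0\}$ after subtracting the linear function $-c\,x_{n+1}$ turns this into an interior Poisson problem with bounded right-hand side in a full ball, so interior estimates again give $\abs{\bar\nabla f} \le C(n, c, d_\O)$ there. Patching the three regions gives the claimed bound $\norm{\bar\nabla f}_{\infty,\O} \le C(n,c,r_e,\theta,d_\O)$.

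The main obstacle I expect is the barrier construction near $\S$: one must verify simultaneously that the barrier is a supersolution of $\bar\De w \le 1$ in $\O$ (forcing $\beta,\gamma$ into a compatible range depending on $r_e$ and $d_\O$), that it dominates $f$ on $\S$, and — the genuinely new ingredient compared to the classical (non-capillary) setting — that its conormal derivative on $T$ does not exceed $c$; this last check is where the height control $\langle O_x, E_{n+1}\rangle \ge r_e\cos\theta$ and the sign restrictions on $(c,\theta)$ are essential, and getting the constants to line up is the delicate part. The behavior near the edge $\Gamma$ (where Dirichlet and oblique conditions meet) is handled by the regularity hypothesis $f \in W^{1,\infty}(\O)$, so no separate corner analysis is needed — only the overlap of the two collar estimates.
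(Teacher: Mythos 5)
Your barrier argument near $\S$ is essentially the same idea as the paper's Step 1: both build a comparison function on an annulus around the exterior touching ball $B_{r_e}(O_x)$, both use the height condition $\langle O_x, E_{n+1}\rangle \ge r_e\cos\theta$ to force $\bar\nabla_{\bar N} w \le c$ on $T$, and both invoke the mixed-boundary comparison principle from \cite[Lemma~4.1]{Pog22} to get a one-sided bound on $f_\nu$ at each point of $\S$. The paper uses the exact annular torsion function $w$ on $A_{r_e,R}$ (with $R$ picked large enough that $\O\subset B_M(O_x)\subset B_R(O_x)$ and a carefully chosen threshold $M$ controlling the sign of $\bar\nabla_{\bar N} w - c$ on $T$), whereas you propose a quadratic-plus-power correction; this is a cosmetic difference and could be made to work, though you should be careful about the sign of your correction term on $\S$. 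The more substantive difference is in how you go from the estimate on $\S$ to the estimate on $\bar\O$.

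Here there is a genuine gap. Your plan is to patch interior estimates with an even-reflection argument in a collar of $T$ and the barrier estimate on $\S$, and you dismiss the corner region by appealing to $f \in W^{1,\infty}(\O)$. But the $W^{1,\infty}$ hypothesis is only qualitative; the whole point of \cref{Prop-GE-Halfspace} is a bound by $C(n,c,r_e,\theta,d_\O)$, which must also hold near $\Gamma$. Moreover, your barrier yields an estimate \emph{on} $\S$, not in a collar of $\S$, and the reflected-domain interior estimate degenerates as you approach $\p\hat\O \supset \Gamma$, so the two regions you control do not in fact overlap. To make a reflection-style patching work you would need the reflected domain $\hat\O$ to satisfy an exterior-sphere condition at the ridge $\Gamma$ (plausible for $\theta\le\pi/2$ but not verified), plus a boundary gradient estimate for the full Dirichlet problem on $\hat\O$. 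The paper avoids all of this with a short global argument (Step 2): set $g(x)=\frac{|x-O|^2-R^2}{2(n+1)}$ with $O$ chosen so that $\bar\nabla_{\bar N} g = c$; then $h=f-g$ is harmonic with $\bar\nabla_{\bar N} h=0$ on $T$, so $\tilde P=|\bar\nabla h|^2$ is subharmonic with vanishing Neumann derivative on $T$, and the Hopf lemma forces $\tilde P$ to attain its maximum on $\S$. This reduces $\|\bar\nabla f\|_{\infty,\O}$ to $\|\bar\nabla f\|_{\infty,\S}$ (already bounded in Step 1) plus $\|\bar\nabla g\|_{\infty,\O}\le C(n,c,d_\O)$, with no collar patching and no corner analysis. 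You should replace your interior/reflection/patching step with this $P$-function maximum principle.
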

\begin{proof}
The case for $\theta=\frac{\pi}{2}$ has been proved in \cite{Pog22}, similar to the closed case in \cite{MP19}.  
Here we prove it for any  $\theta\in(0,\frac{\pi}{2}]$. 

\noindent{\bf Step 1.}  Gradient estimate on $\S$.

For any $x\in\S$, there exists $O_x\in\mfR^{n+1}$ such that $ B_{r_e}(O_x)$ is the touching ball in \cref{Defn-exterior-sphere}.

Let $R>r_e$ to be chosen later.
Denote by $A_{r_e,R}\coloneqq B_R(O_x)\setminus B_{r_e}(O_x)$ the annulus domain and set $\kappa\coloneqq\frac{r_e}{R}$.

 As proved in \cite[Lemma 3.9]{MP19}, the function
    \begin{align*}
        w(y)\coloneqq
\begin{cases}
\frac14\left\{\vert y-O_x\vert^2-r_e^2+R^2(1-\kappa^2)\frac{\log\vert y-O_x\vert-\log r_e}{\log\kappa} \right\}& \text{for } n=1,\\
\frac1{2(n+1)}\left\{\vert y-O_x\vert^2-r_e^2+\frac{R^2(1-\kappa^2)}{1-\kappa^{n-1}}\left[(\frac{\vert y-O_x\vert}{r_e})^{1-n}-1\right]\right\}& \text{for } n\geq 2,
\end{cases}
    \end{align*}
  satisfies
    \begin{align*}
        \begin{cases}
            \bar\De w=1\quad&\text{in }A_{r_e,R},\\
            w=0\quad&\text{on }\p A_{r_e,R},\\
            w\leq0\quad&\text{in } A_{r_e,R}.
        \end{cases}
    \end{align*}
    A direct computation shows that
    \begin{align*}
        \bar\nabla w(y)=
\begin{cases}
\frac12 \frac{y-O_x}{\vert y-O_x\vert^{2}}\left(\vert y-O_x\vert^{2}+\frac{R^2(1-\kappa^2)}{2\log\kappa}\right) &\text{for }n=1,\\
        \frac{1}{n+1}\frac{y-O_x}{\vert y-O_x\vert^{n+1}}\left(\vert y-O_x\vert^{n+1}-\frac{n-1}{2}\frac{R^2r_e^{n-1}(1-\kappa^2)}{1-\kappa^{n-1}}\right)& \text{for } n\geq 2,
\end{cases}
    \end{align*}
    for simplicity denote by $M$ a positive constant such that
\begin{align*}
M^{n+1}\coloneqq
\begin{cases}
-\frac{R^2(1-\kappa^2)}{2\log\kappa}&\text{for }n=1,\\
\frac{n-1}{2}\frac{R^2 r_e^{n-1}(1-\kappa^2)}{1-\kappa^{n-1}}&\text{for } n\geq 2,
\end{cases}
\end{align*}
   thus  on $T$, there holds
    \begin{align}\label{eq-normal-deri-w}
        \bar\nabla_{\bar N}w
        =\left<\bar\nabla w,-E_{n+1}\right>
        =\frac{1}{n+1}\left<O_x,E_{n+1}\right>\left(1-\frac{M^{n+1}}{\vert y-O_x\vert^{n+1}}\right), \forall n\geq 1.
    \end{align}
 Set
    \begin{align*}
        t\coloneqq
        \begin{cases}
            1-\frac{(n+1)c}{r_e\cos\theta},\quad&\theta\in(0,\pi/2),\\
        1,\quad&\theta=\pi/2,
        \end{cases}
    \end{align*}
since $c<0$ for $\theta\in(0,\pi/2)$, we have $t\geq1$.
We wish to choose $R$ large enough so that
\begin{align}\label{ineq-value-R}
R>M>t^{\frac1{n+1}}(r_e+d_\Omega).
\end{align}
In fact, a possible candidate is
\begin{align*}
R=\max\{2r_e,\frac1{r_e},8t(r_e+d_\Omega)^2,2t^{\frac12}r_e^{\frac{1-n}{2}}(r_e+d_{\Omega})^{\frac{n+1}{2}} \}.
\end{align*}
Let us verify \eqref{ineq-value-R}:
since $R\geq 2r_e$, we have $0<\kappa\leq\frac12$. It is easy to check that
\begin{align*}
-\frac{1}{4\log\kappa}<-\frac{(1-\kappa^2)}{2\log\kappa}<1, \text{ and } \frac12<\frac{1-\kappa^2}{1-\kappa^{n-1}}< 2\text{ for }n\geq2.
\end{align*}

For the case $n=1$,
\begin{align*}
M
=\left(-\frac{R^2(1-\kappa^2)}{2\log\kappa}\right)^{\frac12}<R,
\end{align*}
and
\begin{align*}
M
>\frac12\left(-\frac{R^2}{\log\kappa}\right)^{\frac12}=\frac12\left(\frac{R^2}{\log R+\log \frac1{r_e}}\right)^{\frac12}\geq \frac12\left(\frac{R^2}{2\log R}\right)^{\frac12}>\frac12\left(\frac R2\right)^{\frac12}\geq t^{\frac1{2}}(r_e+d_\Omega),
\end{align*}
where the second and  last inequalities hold by the definition of R.

For the case $n\geq 2$,
\begin{align*}
M=
\left(\frac{n-1}{2}\frac{R^2 r_e^{n-1}(1-\kappa^2)}{1-\kappa^{n-1}}\right)^{\frac1{n+1}}
<\left((n-1)R^2{r_e}^{n-1}\right)^{\frac1{n+1}}
\leq\left(\frac{n-1}{2^{n-1}}\right)^{\frac1{n+1}}R<R,
\end{align*}
where the second inequality holds due to $r_e\leq \frac12R$, and
\begin{align*}
M>
\left(\frac{n-1}{4}r_e^{n-1}R^2 \right)^{\frac1{n+1}}\geq \left(\frac{1}{4}r_e^{n-1}R^2 \right)^{\frac1{n+1}}\geq t^{\frac1{n+1}}(r_e+d_\Omega),
\end{align*}
where the last inequality holds by the definition of R.

 Note that for any $y\in\Omega$, the triangle inequality shows that
    \begin{align*}
        {\rm dist}(y,O_x)
        \leq{\rm dist}(y,x)+{\rm dist}(x,O_x)
        \leq r_e+d_\Omega,
    \end{align*}
by \eqref{ineq-value-R} and the fact that $t\geq1$, we have $\Omega\subsetneqq B_{M}(O_x)\subsetneqq B_R(O_x)$, so that
    for any $y\in T$, we have $\vert y-O_x\vert\leq M{t^{-\frac1{n+1}}}$.
    On the other hand,
    since $\theta\in(0,\pi/2]$,
    we see from \cref{Defn-exterior-sphere} (2) that $\left<O_x,E_{n+1}\right>\geq r_e\cos\theta\geq0$.
    Combining these facts, we may use \eqref{eq-normal-deri-w} to find: on $T$,
    \begin{align*}
        \bar\nabla_{\bar N}w
        \leq\frac{1}{n+1}\left<O_x,E_{n+1}\right>\left(1-t\right)
        \leq\frac{1}{n+1}r_e\cos\theta(1-t)
        = c.
    \end{align*}
 Here we have used the definition of $t$ to derive the last inequality.
 In particular, we obtain that $w$ satisfies
    \begin{align*}
        \begin{cases}
            \bar\De w=1\quad&\text{in }\Omega,\\
            w\leq0\quad&\text{on }\S,\\
            \bar\nabla_{\bar N}w\leq c\quad&\text{on }T.
        \end{cases}
    \end{align*}
    Using  the maximum principle (see e.g. \cite[Lemma 4.1]{Pog22}) for $w-f$, we see that $w\leq f$ on $\overline\Omega$.
    Moreover, by construction $w-f$ attains its maximal at $x\in\S$, and hence for any $x\in\S\setminus\Gamma$, the Hopf's boundary point lemma shows that
    \begin{align*}
        0\leq \bar\nabla _\nu f(x)\leq \bar\nabla_\nu w(x)
    \end{align*}
  A direct computation yields that  
    \begin{align*}
        \nabla_\nu w(x)=
        \begin{cases}
-\frac12 \left(r_e+\frac{R^2(1-\kappa^2)}{2r_e\log\kappa}\right) &\text{for }n=1,\\
        -\frac{1}{n+1}\left(r_e-\frac{n-1}{2}\frac{1-\kappa^2}{1-\kappa^{n-1}}\frac{R^2}{r_e}\right)& \text{for } n\geq 2,
\end{cases}
    \end{align*}
    Since $f=0$ on $\S$, we obtain the gradient estimate along $\S$:
    \begin{align}
        \|\bar\nabla f\|_{\infty,\S}\leq C(n,c,r_e,\theta,d_\Omega).
    \end{align}

    \noindent{\bf Step 2. }Gradient estimate on $\overline\Omega$.

    We continue to use the auxiliary function $g$ constructed in the proof of \cref{Prop-C^0-halfspace}, and set $h\coloneqq f-g$, $\tilde P\coloneqq\vert\bar\nabla h\vert^2$. A direct computation shows that $\bar\nabla_{\bar N}h=0$ on $T$, and
    \begin{align*}
        \begin{cases}
            \bar\De\tilde P\geq0\quad&\text{in }\Omega,\\
            \bar\nabla_{\bar N}\tilde P=0\quad&\text{on }T.
        \end{cases}
    \end{align*}
    It follows from the Hopf's boundary point lemma that $\tilde P$ attains its maximum on $\S$, and hence for any $y\in\Omega$, we have
    \begin{align*}
        &2\|\bar\nabla f\|_{\infty,\S}^2+2\|\bar\nabla g\|_{\infty,\S}^2
        \geq \vert\bar\nabla f(y)-\bar\nabla g(y)\vert^2\\
        =&\vert\bar\nabla f(y)\vert^2+\vert\bar\nabla g(y)\vert^2-2\left<\bar\nabla f(y),\bar\nabla g(y)\right>
        \geq\frac{\vert\bar\nabla f(y)\vert^2}{2}-\vert\bar\nabla g(y)\vert^2.
    \end{align*}
    Notice that $\|\bar\nabla g\|_{\infty,\Omega}$ is controlled by some positive constant $C$ depends only on $n,c,d_\Omega$, the assertion then follows easily.
\end{proof}

\subsection{Proof of the quantitative stability results}\label{Sec-4-3}
We begin with the following (geometric) lower bound estimate of $-f$, which is adapted from \cite[Lemma 4.1]{MP22}.
\begin{proposition}
Let $f$ be the solution of \eqref{eq-mixed-halfspace0}  for some $c\leq0$, then there holds
\begin{align}\label{ineq-MP22-4.1}
    -f(x)\geq \frac{1}{2(n+1)} \delta^{2}_{\partial \Omega}(x), \quad\forall x\in \bar{\Omega}.
\end{align}  
\end{proposition}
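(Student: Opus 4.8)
The plan is to prove the pointwise lower bound $-f(x)\geq\frac{1}{2(n+1)}\delta_{\partial\Omega}^2(x)$ by a comparison argument based on the touching-ball structure already exploited in \cref{Prop-C^0-halfspace} and \cref{Prop-GE-Halfspace}. Fix $x\in\overline{\Omega}$ and let $x_0\in\partial\Omega$ be a closest boundary point, so that $\delta_{\partial\Omega}(x)=|x-x_0|$. There are two cases according to whether the nearest point lies on $\Sigma$ or on $T$.

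First I would treat the case where the maximal ball $B=B_{\delta}(x)$, $\delta=\delta_{\partial\Omega}(x)$, is contained in $\Omega$ and touches $\partial\Omega$ at a point of $\Sigma$ (equivalently $x_0\in\Sigma$). On this ball compare $f$ with the radial quadratic $q(y)=\frac{|y-x|^2-\delta^2}{2(n+1)}$, which satisfies $\bar\Delta q=1$ in $B$ and $q=0$ on $\partial B$. Since $\bar\Delta(q-f)=0$ in $B$ and $q-f=-f\geq 0$ on $\partial B\cap\Sigma$, while on the (possibly empty) part of $\partial B$ interior to $\Omega$ we have $q=0\leq -f$ because $f\le 0$ throughout $\overline\Omega$ by \cref{Thm-rigid-BVP-halfspace} resp.\ the maximum-principle argument in \cref{Prop-C^0-halfspace}, the maximum principle gives $q\leq f$ in $B$, hence $-f(x)\geq -q(x)=\frac{\delta^2}{2(n+1)}$.

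Next I would handle the case $x_0\in T$, i.e.\ the nearest boundary point lies on the flat part. Here $\delta_{\partial\Omega}(x)\le x_{n+1}$ and it is natural to compare on the half-ball $B^+=B_{\delta}(x_0)\cap\mathbb{R}^{n+1}_+\subset\Omega$ with the function $q(y)=\frac{|y-x_0|^2-\delta^2}{2(n+1)}$: again $\bar\Delta q=1$, $q\le 0=:$ upper bound for $f$ on the spherical part of $\partial B^+$ (using $f\le 0$), and on the flat portion $B_{\delta}(x_0)\cap\partial\mathbb{R}^{n+1}_+\subset T$ we have $\bar\nabla_{\bar N}q=\langle\bar\nabla q,-E_{n+1}\rangle=-\frac{(y-x_0)_{n+1}}{n+1}=0$ since $y_{n+1}=0=(x_0)_{n+1}$, so $\bar\nabla_{\bar N}(q-f)=-c\geq 0$ there because $c\le 0$. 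The maximum principle with oblique/Neumann boundary data on the flat face (e.g.\ \cite[Lemma 4.1]{Pog22}) then yields $q\le f$ on $B^+$, whence $-f(x)\ge -q(x)=\frac{\delta^2-|x-x_0|^2}{2(n+1)}$; combined with $|x-x_0|=\delta_{\partial\Omega}(x)$ one must be slightly careful, so in fact I would instead center the comparison ball at $x$ itself, taking $q(y)=\frac{|y-x|^2-\delta^2}{2(n+1)}$ on $B_{\delta}(x)\subset\Omega$ and noting $B_{\delta}(x)$ touches $\partial\Omega$ only at points of $\Sigma\cup T$ where either $f\le 0=q$ holds on $\partial B_\delta(x)$, reducing both cases to the same interior comparison as above.

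The only genuinely delicate point is justifying the comparison across the corner $\Gamma=\Sigma\cap T$ and across the mixed boundary condition with the limited regularity $f\in W^{1,\infty}(\Omega)\cap W^{2,2}(\Omega)$; this is exactly where I would invoke the weak maximum principle for mixed boundary value problems in the form of \cite[Lemma 4.1]{Pog22}, which is already cited for precisely this purpose in \cref{Prop-C^0-halfspace} and \cref{Prop-GE-Halfspace}. Everything else is the elementary observation that the maximal inscribed ball $B_{\delta_{\partial\Omega}(x)}(x)$ lies inside $\Omega$, so the barrier $q$ is admissible, and the conclusion is immediate from $-q(x)=\frac{\delta_{\partial\Omega}(x)^2}{2(n+1)}$. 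I expect the main obstacle to be purely expository: cleanly organizing the at-most-two cases so that a single interior comparison on $B_{\delta_{\partial\Omega}(x)}(x)$ covers both, using only $f\le 0$ on $\overline\Omega$ and $\bar\Delta f=1$, without having to separately analyze the behavior near $T$ or $\Gamma$.
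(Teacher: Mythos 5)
Your final argument is exactly the paper's: on the maximal inscribed ball $B_{\delta_{\partial\Omega}(x)}(x)\subset\Omega$, compare $f$ with the torsion function $q(y)=\frac{|y-x|^2-\delta^2}{2(n+1)}$ of that ball, use $f\le 0$ on $\overline\Omega$ (from the maximum principle for $c\le 0$) to get $q-f\ge 0$ on $\partial B$, and conclude $q\ge f$ in $B$, hence $-f(x)\ge\frac{\delta^2}{2(n+1)}$. The only blemish is a sign slip in the middle where you write ``$q\le f$ in $B$'' when the boundary comparison actually yields $q\ge f$ (which is what your final inequality correctly uses), and the preliminary two-case discussion near $T$ is superfluous, as you yourself note.
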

\begin{proof}
Since $c\leq0$, applying the maximum principle \cite[Lemma 4.1]{Pog22}, we see that $f$ is non-positive in $\O$.
For any $x\in\O$, we set $R=\delta_{\p\O}(x)$ and we consider the open ball $B_R(x)\subseteq\O$, notice that the function $w(y)=\frac{\vert y-x\vert^2-R^2}{2(n+1)}$ is the torsion potential, i.e.,
\begin{align*}
    \bar\De w=1\text{ in }B_R(x),\quad w=0\text{ on }\p B_R(x),
\end{align*}
by comparison we see that $w\geq f$ on $\overline B_R(x)$ and it follows that $-\frac{R^2}{2(n+1)}=w(x)\geq f(x)$, which proves \eqref{ineq-MP22-4.1}.
\end{proof}


\begin{proof}[Proof of \cref{Thm-stabi-OverDetermined-halfspace}]
For $n\geq2$,
our starting point is the estimate \eqref{ineq-rhoe-rhoi} with $\alpha=1$.
To finish the proof, it suffice to estimate the integral on the right. Recall that as shown in the proof of \cref{Thm-rigid-BVP-halfspace}, if $f_\nu\equiv$ constant on $\S$, then $f_\nu=\frac{R(c,\O)}{n+1}$, therefore by choosing $R=R(c,\O)$ in the integral identity \eqref{eq-integral-identity-1}, we get
\begin{align*}
    \int_\O(-f)\bar\De P\rd x\leq C(n,c,d_\O,L)\left\| f_\nu^2-(\frac{R(c,\O)}{n+1})^2\right\|_{1,\Sigma},
\end{align*}
which together with \eqref{ineq-MP22-4.1} yields that
\begin{align*}
    \int_\O\delta_{\p\O}^2\bar\De P\rd x\leq C(n,c,d_\O,L)\left\| f_\nu^2-(\frac{R(c,\O)}{n+1})^2\right\|_{1,\Sigma}.
\end{align*}
 Thus we obtain from \eqref{ineq-rhoe-rhoi} and \cref{Prop-GE-Halfspace} that
 \begin{align*}
     \rho_e-\rho_i\leq C(n,c,d_\O,\eta, a, \theta, r_e)\left\| f_\nu^2-(\frac{R(c,\O)}{n+1})^2\right\|^{\frac{1}{n+1}}_{1,\S}.
 \end{align*}

The proof for the case $n=1$ follows similarly by using \eqref{ineq-rhoe-rhoi-n=1}, which completes the proof.

\end{proof}

\begin{proof}[Proof of \cref{Thm-stabi-Meancurvature-halfspace}]
As verified in \cite[Proof of Theorem 1.1]{JXZ22}, for $\theta\in(0,\pi/2]$, there exists $f\in W^{2,2}(\Omega)\cap   W^{2,1}(T)\cap C^2(\overline{\Omega}\setminus \Gamma)\cap C^1(\overline{\Omega})$ that solves \eqref{eq-mixed-halfspace0} with $c=c(\theta,\O)=-\frac{n}{n+1}\cot\theta\frac{  |T|}{|\Gamma|}$.

For $n\geq2$, we consider \eqref{ineq-rhoe-rhoi} with $\alpha=0$.
To finish the proof, it suffice to estimate the integral on the right.
Recall that as shown in \cref{Prop-H1}, if $\S$ is of constant mean curvature, then $H=H(\theta,\O)$.
Observe also that the integral identity \eqref{iden-integral-meancurvature-1} implies
\begin{align*}
    \int_\O\bar\De P\rd x\leq L^2\left\| H(\theta,\O)-H\right\|_{1,\S},
\end{align*}
and it follows from \eqref{ineq-rhoe-rhoi} and \cref{Prop-GE-Halfspace} that
\begin{align*}
    \rho_e-\rho_i\leq C(n,\abs{T},\abs{\Gamma},d_\O,\eta, a, \theta, r_e)\left\| H(\theta,\O)-H\right\|_{1,\S}^{\frac{1}{n+1}}.
\end{align*}

The proof for the case $n=1$ follows similarly by using \eqref{ineq-rhoe-rhoi-n=1}, which completes the proof.
\end{proof}

\subsection{Quantitative stability of Heintze-Karcher-type inequalities}\label{Sec-4-4}
In this subsection, we provide an alternative approach to the quantitative stability estimate of the Alexandrov-type theorem through the well-known Heintze-Karcher inequality.

First we record the capillary Heintze-Karcher-type inequality in the half-space (\cite[Theorem 1.1]{JXZ22}), which states that for a domain $\Omega\subset\mfR^{n+1}_+$ such that
$\S$ intersects $\p\mfR^{n+1}_+$ at a constant contact angle $\theta\in (0,\frac{\pi}{2}]$ and $H>0$ on $\Sigma$,
there holds that
\begin{align}\label{ineq-HK-halfspace}
    \int_\S\frac{1}{H}\rd A
    \geq\frac{n+1}{n}\vert\O\vert+\cot\theta\frac{\vert T\vert^2}{\vert\Gamma\vert}
\end{align}

A refined version of \eqref{ineq-HK-halfspace} is as follows, which can be used to establish a quantitative estimate:
\begin{proposition}
Let $\Omega\subset\mfR^{n+1}_+$ be such that
$\S$ intersects $\p\mfR^{n+1}_+$ at a constant contact angle $\theta\in (0,\frac{\pi}{2}]$ and $H>0$ on $\Sigma$. Then
there holds that
\begin{align}\label{iden-integral-HK-1}
    \int_\O\left\{\vert\bar\nabla^2f\vert^2-\frac{(\bar\De f)^2}{n+1}\right\}\rd x
    \leq\left(\frac{n}{n+1}\right)^2\left\{\int_\S\frac{1}{H}\rd A-\left(\frac{n+1}{n}\vert\O\vert+\cot\theta\frac{\vert T\vert^2}{\vert\Gamma\vert}\right)\right\}.
\end{align}
\end{proposition}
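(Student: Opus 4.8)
The plan is to reduce \eqref{iden-integral-HK-1} to a single pointwise AM--GM inequality on $\S$, fed through the Reilly-type identity that already underlies \eqref{iden-integral-meancurvature-1}. I fix the solution $f$ of \eqref{eq-mixed-halfspace0} with $c=c(\theta,\O)$ as in \eqref{defn-c1}; for $\theta\in(0,\pi/2]$ this $f$ exists in the regularity class used to prove \eqref{iden-integral-meancurvature-1}, and since $c\le 0$ in that range the maximum principle gives $f\le 0$ in $\O$, hence $f_\nu\ge 0$ on $\S$ (recall $f\equiv 0$ there). Integrating $\bar\De f=1$ over $\O$ and using $f_{\bar N}=c$ on $T$ also gives $\int_\S f_\nu\,\rd A=\vert\O\vert-c\vert T\vert$. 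From the rearrangement \eqref{eq-LaplaceP} of Reilly's formula \eqref{formu-reilly1} (together with \eqref{eq-JXZ22-28}) one has
\[
\int_\O\Big\{\vert\bar\nabla^2 f\vert^2-\tfrac{(\bar\De f)^2}{n+1}\Big\}\rd x=\tfrac n{n+1}\vert\O\vert+c^2\tan\theta\,\vert\Gamma\vert-\int_\S Hf_\nu^2\,\rd A,
\]
so it suffices to bound $-\int_\S Hf_\nu^2\,\rd A$ above by a suitable multiple of $\int_\S\tfrac1H\,\rd A$ plus the right constant.

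The main step is the pointwise inequality on $\S$, legitimate since $H>0$ and $f_\nu\ge 0$:
\[
Hf_\nu^2+\Big(\tfrac n{n+1}\Big)^2\tfrac1H\ \ge\ 2\sqrt{Hf_\nu^2\cdot\big(\tfrac n{n+1}\big)^2\tfrac1H}\ =\ \tfrac{2n}{n+1}f_\nu .
\]
Integrating over $\S$ and inserting $\int_\S f_\nu\,\rd A=\vert\O\vert-c\vert T\vert$ yields $\int_\S Hf_\nu^2\,\rd A\ge\tfrac{2n}{n+1}(\vert\O\vert-c\vert T\vert)-\big(\tfrac n{n+1}\big)^2\int_\S\tfrac1H\,\rd A$; combined with the displayed identity this gives
\[
\int_\O\Big\{\vert\bar\nabla^2 f\vert^2-\tfrac{(\bar\De f)^2}{n+1}\Big\}\rd x\ \le\ \Big(\tfrac n{n+1}\Big)^2\int_\S\tfrac1H\,\rd A-\Big[\tfrac{2n}{n+1}(\vert\O\vert-c\vert T\vert)-\tfrac n{n+1}\vert\O\vert-c^2\tan\theta\,\vert\Gamma\vert\Big].
\]

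It then remains to identify the bracketed constant. Using $c=-\tfrac n{n+1}\cot\theta\tfrac{\vert T\vert}{\vert\Gamma\vert}$ one gets $-c\vert T\vert=\tfrac n{n+1}\cot\theta\tfrac{\vert T\vert^2}{\vert\Gamma\vert}$ and $c^2\tan\theta\,\vert\Gamma\vert=\tfrac n{n+1}(-c\vert T\vert)$, so $\tfrac n{n+1}\vert\O\vert+c^2\tan\theta\,\vert\Gamma\vert=\tfrac n{n+1}(\vert\O\vert-c\vert T\vert)$ and the bracket reduces to $\tfrac n{n+1}(\vert\O\vert-c\vert T\vert)=\big(\tfrac n{n+1}\big)^2\tfrac{n+1}{n}(\vert\O\vert-c\vert T\vert)=\big(\tfrac n{n+1}\big)^2\big(\tfrac{n+1}{n}\vert\O\vert+\cot\theta\tfrac{\vert T\vert^2}{\vert\Gamma\vert}\big)$, which is exactly \eqref{iden-integral-HK-1}.

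I do not expect a serious obstacle: the only analytic inputs, solvability of \eqref{eq-mixed-halfspace0} in the right regularity class and the validity of Reilly's formula up to the edge $\Gamma$, are already in hand, and the rest is the AM--GM step plus the algebraic bookkeeping for $c(\theta,\O)$. The only care needed is to justify $f_\nu\ge 0$ on $\S$ so the square root in AM--GM is the correct branch, and to match the capillary term $c^2\tan\theta\vert\Gamma\vert$ from \eqref{eq-LaplaceP} with the $\cot\theta\,\vert T\vert^2/\vert\Gamma\vert$ in \eqref{ineq-HK-halfspace}. As a sanity check, $\vert\bar\nabla^2 f\vert^2\ge\tfrac{(\bar\De f)^2}{n+1}$ pointwise makes the left-hand side of \eqref{iden-integral-HK-1} nonnegative, so \eqref{iden-integral-HK-1} immediately recovers \eqref{ineq-HK-halfspace}; the refinement says precisely that the Heintze--Karcher deficit controls $\int_\O\bar\De P\,\rd x$, which is what the quantitative stability argument needs.
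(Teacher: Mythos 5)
Your argument is correct, and the algebra checks out, but the route is genuinely different from the paper's. The paper starts from the same Reilly rearrangement \eqref{eq-LaplaceP} and the same identity $\int_\S f_\nu\,\rd A=\vert\O\vert-c\vert T\vert$, but then estimates $\int_\S Hf_\nu^2\,\rd A$ from below via the \emph{integral} Cauchy--Schwarz inequality $\bigl(\int_\S f_\nu\bigr)^2\le\bigl(\int_\S Hf_\nu^2\bigr)\bigl(\int_\S\tfrac1H\bigr)$; this produces a quotient with $\int_\S\tfrac1H$ in the denominator, and the paper then has to invoke the Heintze--Karcher inequality \eqref{ineq-HK-halfspace} (both for the sign of the numerator and to bound the prefactor by $n/(n+1)$) in order to clear the denominator and reach the displayed $\bigl(\tfrac n{n+1}\bigr)^2\{\ldots\}$ form. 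You instead use a \emph{pointwise} weighted AM--GM, $Hf_\nu^2+\bigl(\tfrac n{n+1}\bigr)^2\tfrac1H\ge\tfrac{2n}{n+1}f_\nu$, which after integration lands directly on the stated constant with no further appeal to \eqref{ineq-HK-halfspace}. This buys a cleaner, self-contained derivation (indeed \eqref{ineq-HK-halfspace} then falls out as a corollary of nonnegativity of the left side, as you note). One small remark: the reduction $\vert f_\nu\vert\ge f_\nu$ already makes the inequality direction correct regardless of the sign of $f_\nu$, so the justification of $f_\nu\ge0$ you flag as ``the only care needed'' is not actually required for the estimate to hold, though it does make the AM--GM step transparent and is anyway available from the maximum principle since $c\le0$.
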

\begin{proof}
As verified in \cite{JXZ22}, for $\theta\in(0,\pi/2]$, there exists a solution $f\in W^{2,2}(\Omega)\cap   W^{2,1}(T)\cap C^2(\overline{\Omega}\setminus \Gamma)\cap C^1(\overline{\Omega})$ to \eqref{eq-mixed-halfspace0} with $c=c(\theta,\O)$.
Recalling the definition of $c(\theta,\O)$, a simple integration by parts yields that
\begin{align*}
    \vert\O\vert
    =\int_\O\bar\De f\rd x
    =\int_\S f_\nu\rd A-\frac{n}{n+1}\cot\theta\frac{\vert T\vert^2}{\vert\Gamma\vert},
\end{align*}
and hence the H\"older inequality gives
\begin{align*}
    \left(\vert\O\vert+\frac{n}{n+1}\cot\theta\frac{\vert T\vert^2}{\vert\Gamma\vert}\right)^2
    =\left(\int_\S f_\nu\rd A\right)^2
    \leq\left(\int_\S Hf_\nu^2\rd A\right)\cdot\left(\int_\S\frac{1}{H}\rd A\right),
\end{align*}
putting this back into \eqref{eq-LaplaceP}, we find
\begin{align*}
    \int_\O\bar\De P\rd x
    \leq&\frac{n}{n+1}\vert\O\vert+(\frac{n}{n+1})^2\cot\theta\frac{\vert T\vert^2}{\vert\Gamma\vert}-\frac{\left(\vert\O\vert+\frac{n}{n+1}\cot\theta\frac{\vert T\vert^2}{\vert\Gamma\vert}\right)^2}{\int_\S\frac{1}{H}\rd A}\\
    =&\frac{n}{n+1}\left(\vert\O\vert+\frac{n}{n+1}\cot\theta\frac{\vert T\vert^2}{\vert\Gamma\vert}\right)\left(\frac{\int_\S\frac{1}{H}\rd A-\frac{n+1}{n}\left(\vert\O\vert+\frac{n}{n+1}\cot\theta\frac{\vert T\vert^2}{\vert\Gamma\vert}\right)}{\int_\S\frac{1}{H}\rd A}\right)\\
    \leq&\left(\frac{n}{n+1}\right)^2\left\{\int_\S\frac{1}{H}\rd A-\frac{n+1}{n}\left(\vert\O\vert+\frac{n}{n+1}\cot\theta\frac{\vert T\vert^2}{\vert\Gamma\vert}\right)\right\},
\end{align*}
where we have used \eqref{ineq-HK-halfspace} for the last inequality. This completes the proof.
\end{proof}

\begin{theorem}\label{Thm-stabi-HK-halfspace}
Let $\O\subset\mathbb{R}_+^{n+1}$ satisfy the $(\eta,a)$-uniform interior cone condition and the $r_e$-uniform exterior $\theta$-spherical cap condition for some $\eta\in(0,\pi/2]$, $a>0$, $\theta\in(0,\pi/2], r_e>0$.
Assume that $\S=\overline{\p\O\cap \mfR^{n+1}_+}$ meets $\p\mfR^{n+1}_+$ at the constant contact angle $\theta\in(0,\pi/2]$ and $H>0$ on $\Sigma$.
Let $O$ be the center chosen as in \eqref{defn-O-halfspace},
and define
\begin{align*}
\epsilon\coloneqq
\int_\S\frac{1}{H}\rd A-\left(\frac{n+1}{n}\vert\O\vert+\cot\theta\frac{\vert T\vert^2}{\vert\Gamma\vert}\right)\geq0.
\end{align*}
Then there exists some constant $C=C(n,\abs{T},\abs{\Gamma},d_\O,\eta, a, \theta, r_e)>0$ such that
\begin{align*}
\rho_e-\rho_i
\leq C
\begin{cases}
\epsilon^{\frac{1}{n+1}},\quad&n\geq2,\\
\epsilon^{\frac{1}{2}}\max\left\{\log\left(\epsilon^{-\frac12}\right),1\right\},\quad&n=1.
\end{cases}
\end{align*}
\end{theorem}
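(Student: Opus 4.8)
The plan closely follows the proof of \cref{Thm-stabi-Meancurvature-halfspace}, with the mean-curvature deficit replaced by the Heintze--Karcher deficit $\epsilon$ and the identity \eqref{iden-integral-meancurvature-1} replaced by the refined Heintze--Karcher inequality \eqref{iden-integral-HK-1}. First, as recalled in \cite[Proof of Theorem 1.1]{JXZ22}, for $\theta\in(0,\pi/2]$ the mixed boundary value problem \eqref{eq-mixed-halfspace0} with $c=c(\theta,\O)$ admits a solution $f\in W^{2,2}(\O)\cap W^{2,1}(T)\cap C^2(\overline\O\setminus\Gamma)\cap C^1(\overline\O)$; since $c(\theta,\O)=-\frac{n}{n+1}\cot\theta\frac{|T|}{|\Gamma|}$ is negative for $\theta\in(0,\pi/2)$ and zero for $\theta=\pi/2$, the hypotheses of \cref{Prop-GE-Halfspace} hold, so $\|\bar\nabla f\|_{\infty,\O}\le L$ with $L=L(n,|T|,|\Gamma|,\theta,r_e,d_\O)$. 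As in \cref{Sec-4-1} I would set $h=\frac{|x-O|^2}{2(n+1)}$ and $\phi=h-f$; the choice \eqref{defn-O-halfspace} of $O$ forces $\int_\O\bar\nabla\phi\,\rd x=\vec 0$, and \eqref{eq-Lipschitz-phi} then gives the geometric bound $\|\bar\nabla\phi\|_{\infty,\O}\le\frac{d_\O}{n+1}+2L$. By \eqref{eq-rhoe-rhoi} (and $d_\O\le 4\rho_e$, see \cite[Lemma 4.3]{MP22}) it then suffices to estimate $\max_{\overline\O}\phi-\min_{\overline\O}\phi$.

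Next I would control the $L^2$-norm of $\bar\nabla\phi$. Since $\bar\nabla^2 h=\frac{1}{n+1}{\rm Id}$ and $\bar\De f=1$, the pointwise identity $|\bar\nabla^2\phi|^2=|\bar\nabla^2 f|^2-\frac{(\bar\De f)^2}{n+1}=\bar\De P$ holds, hence $\|\bar\nabla^2\phi\|_{2,\O}^2=\int_\O\bar\De P\,\rd x$. The key step is the refined Heintze--Karcher inequality \eqref{iden-integral-HK-1}, which bounds $\int_\O\bar\De P\,\rd x\le\left(\frac{n}{n+1}\right)^2\epsilon$; feeding this into the Hardy--Poincar\'e inequality \eqref{ineq-Hessian-phi} with $\alpha=0$ (whose hypothesis $\int_\O\bar\nabla\phi\,\rd x=\vec 0$ is exactly our normalization of $O$) yields $\|\bar\nabla\phi\|_{2,\O}\le C(n,d_\O,\eta,a)\,\epsilon^{1/2}$.

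Finally, for $n\ge 2$ I would plug the two bounds into \cref{Lem-MP22-Lem-3-4}: inequality \eqref{ineq-maxphi-minphi} gives $\max_{\overline\O}\phi-\min_{\overline\O}\phi\le C\|\bar\nabla\phi\|_{2,\O}^{2/(n+1)}\|\bar\nabla\phi\|_{\infty,\O}^{(n-1)/(n+1)}\le C\epsilon^{1/(n+1)}$, and \eqref{eq-rhoe-rhoi} then yields the claimed estimate with $C=C(n,|T|,|\Gamma|,d_\O,\eta,a,\theta,r_e)$; the case $n=1$ is handled the same way but uses the $n=1$ branch of \cref{Lem-MP22-Lem-3-4} (i.e.\ \eqref{ineq-rhoe-rhoi-n=1} with $\alpha=0$) together with the elementary monotonicity of $t\mapsto t\log(e/t)$ near $0$ to pass from $\|\bar\nabla\phi\|_{2,\O}\le C\epsilon^{1/2}$ to the factor $\epsilon^{1/2}\max\{\log(\epsilon^{-1/2}),1\}$. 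Since essentially all the work has already been carried out in earlier sections, I do not expect a genuine obstacle; the only points requiring care are checking that the sign of $c(\theta,\O)$ is compatible with \cref{Prop-GE-Halfspace} (this is what upgrades the constant to a purely geometric one rather than one depending on an a priori Lipschitz bound for $f$), and making sure to use the unweighted ($\alpha=0$) versions of \eqref{ineq-Hessian-phi} and \eqref{ineq-rhoe-rhoi}, since the right-hand side of \eqref{iden-integral-HK-1} carries no $\delta_{\p\O}$ weight.
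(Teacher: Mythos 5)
Your proof is correct and takes essentially the same route as the paper: use \eqref{ineq-rhoe-rhoi} (and its $n=1$ counterpart \eqref{ineq-rhoe-rhoi-n=1}) with $\alpha=0$, bound $\int_\O\bar\De P\,\rd x$ by the refined Heintze--Karcher inequality \eqref{iden-integral-HK-1}, and control the Lipschitz constant $L$ geometrically via \cref{Prop-GE-Halfspace} after verifying that $c(\theta,\O)\le 0$ is compatible with its hypotheses. The paper's proof is just a terse pointer to the machinery of \cref{Sec-4-1}--\cref{Sec-4-2}; you have correctly reassembled all the ingredients, including the necessity of the unweighted $\alpha=0$ version because \eqref{iden-integral-HK-1} carries no $\delta_{\p\O}$ factor.
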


\begin{proof}
For $n\geq2$, we consider \eqref{ineq-rhoe-rhoi} with $\alpha=0$.
Note that the integral on the right is controlled thanks to the integral inequality \eqref{iden-integral-HK-1}.
So the assertion follows easily by using \cref{Prop-GE-Halfspace}.

The proof for the case $n=1$ follows similarly by using \eqref{ineq-rhoe-rhoi-n=1}, which completes the proof.
\end{proof}

\

\noindent{\bf 
Statements and Declarations.}
Data sharing not applicable to this article as no datasets were generated or analysed during the current study.

\

\bibliographystyle{alpha}
\bibliography{Stability.bib}
\end{document}